\newtheorem{prethm}{{\bf Theorem}}
\newenvironment{thm}{\begin{prethm}{\hspace{-0.5
               em}{\bf.}}}{\end{prethm}}
\newtheorem{prepro}[prethm]{Proposition}
\newtheorem{prelem}[prethm]{Lemma}
\newenvironment{lem}{\begin{prelem}{\hspace{-0.5
               em}{\bf.}}}{\end{prelem}}
\newtheorem{precor}[prethm]{Corollary}
\newenvironment{cor}{\begin{precor}{\hspace{-0.5
               em}{\bf.}}}{\end{precor}}
\newtheorem{prerem}[prethm]{Remark}
\newtheorem{preconj}[prethm]{{\bf Conjecture}}
\newenvironment{conj}{\begin{preconj}{\hspace{-0.5
              em}{\bf.}}}{\end{preconj}}
\newtheorem{preexample}{{\bf Example}}
\newtheorem{preproof}{{\bf Proof.}}
\newenvironment{proof}[1]{\begin{preproof}{\rm
               #1}\hfill{$\Box$}}{\end{preproof}}
\newcommand{\noi}{\noindent}
\newcommand{\2}{\sqrt2}
\newcommand{\p}{\top}
\newcommand{\la}{\lambda}
\newcommand{\cl}{{\cal L}}
\newcommand{\h}{{\cal H}}
\newcommand{\s}{{\cal S}}
\newcommand{\D}{{\cal D}}
\newcommand{\R}{{\cal R}}
\newcommand{\q}{{\cal Q}}
\newcommand{\B}{{\rm BIBD}}
\newcommand{\spec}{{\rm spec}}
\title{Bipartite graphs with five eigenvalues and pseudo designs}
\author{\sc Ebrahim Ghorbani\\
{\small {Department of Mathematics, K.N. Toosi University of Technology,}}\\
{\small P.O. Box 16315-1618, Tehran, Iran}\\
 {\small { School of Mathematics, Institute
for Research in Fundamental Sciences (IPM),}} \\
 {\small { P.O. Box 19395-5746, Tehran, Iran}}\\
 {\tt\small e\_ghorbani@ipm.ir}}
\begin{document}
\maketitle

\begin{abstract}
A pseudo $(v,\, k,\, \la)$-design is a pair $(X, {\cal B})$ where $X$ is a $v$-set and ${\cal B}=\{B_1,\ldots,B_{v-1}\}$ is
a collection of $k$-subsets (blocks) of $X$
such that each two distinct $B_i, B_j$ intersect in $\la$ elements; and
$0\le\la <k \le v-1$. We use the notion of pseudo designs to characterize  graphs of order $n$ whose (adjacency) spectrum contains a zero and  $\pm\theta$ with multiplicity $(n-3)/2$ where $0<\theta\le\sqrt{2}$.
  Meanwhile, partial results confirming a conjecture of O. Marrero on characterization of pseudo $(v,\, k,\, \la)$-designs  are obtained.

\vspace{3mm}
\noindent {\em AMS Classification}:  05C50; 05B05; 05B30\\
\noindent{\em Keywords}: Spectrum of graph; pseudo design; BIBD; DS graph; Cospectral graphs; incidence graph; Subdivision of star
\end{abstract}

\section{Introduction}
 To study bipartite graphs with four/five  distinct (adjacency) eigenvalues, one needs to investigate combinatorial designs
  with two singular values (i.e. the matrix $NN^\p$  has only two positive eigenvalues where $N$ is the $(0,1)$-incidence matrix of the design).
  Recently, van Dam and Spence \cite{ds} studied bipartite graphs with four eigenvalues which are precisely the incidence graphs of designs withe two singular values with nonsingular and square $N$. These designs are called  uniform multiplicative designs, introduced by Ryser \cite{r2}. In \cite{ds2},   bipartite biregular graphs with five distinct eigenvalues were investigated.   These graphs correspond to designs with two singular values, constant block size and constant replication number.  These designs are called partial geometric designs first introduced by Bose (cf. \cite{bss}). Designs with few distinct singular values are also of interest from statistical point of view. R.A. Bailey (cf. \cite{c}) recently raised the question that which designs have  three eigenvalues. To be more specific, it was asked for which designs with constant block
size, constant replication, and incidence matrix $N$, does $NN^\p$ have three distinct eigenvalues.

In this paper, we continue this line by studying bipartite graphs with five eigenvalues where the second largest eigenvalue is relatively small. To be more precise, we characterize graphs with $n$ vertices whose spectrum contains
   $\{0,(\pm \theta)^{\frac{n-3}{2}}\}$ where $0<\theta\le\sqrt{2}$.
         The restriction to $\sqrt{2}$, comes from our limited knowledge of the corresponding designs. Having enough information of related designs, one can characterize graphs with larger $\theta$.  As it will be explained in the next section, it follows that  $\theta$ must be a square root of an integer.
         So  the next possible $\theta$ is $\sqrt{3}$.

          The graphs with $n$ vertices whose spectrum contains $(\pm\theta)^{\frac{n-2}{2}}$ with $0<\theta\le\sqrt{2}$ were already characterized by  van Dam and Spence  \cite{ds}. Note that the incidence graphs of symmetric $(v,\,k,\,\la)$-designs are precisely the regular graphs with the required property, and $\theta=\sqrt{k-\la}$.

 The graphs of the subject of the paper have a close connection with a family of combinatorial designs called pseudo designs.
  Therefore, we first study pseudo designs following Marrero \cite{m74, m77} and Woodall \cite{w}.
   Our investigation have some implications on a conjecture of Marrero on characterization of pseudo designs.
    We then make use of these results to determine the
   families of graphs whose spectrum contains $\{0,(\pm \theta)^{\frac{n-3}{2}}\}$.

By means of the spectral characterization of the aforementioned graphs, we find some new families of graphs  which are DS (i.e. determined by spectrum). Finding new families of DS graphs is one of the challenging and very active research subjects in spectral graph theory. For more about DS graphs see the surveys \cite{dh1,dh}.

\section{Preliminaries}

All the graphs that we consider in this paper are finite,
simple and undirected.
 The {\it order} of a graph $G$ is the number of vertices of $G$.
 By the eigenvalues of $G$ we mean
 those of its adjacency matrix. The {\em spectrum} of  $G$ is the multiset of eigenvalues of $G$.
 The {\em subdivision} of a graph $G$ is the graph obtained by inserting a new vertex
 on every edge of $G$. We denote by $\s_{2k+1}$ the subdivision of the star $K_{1,k}$.
The complete bipartite graph $K_{k,k}$ minus a perfect matching is denoted by $\cl_{k,k}$. We denote by $\h_{k,k+1}$ the resulting graph from
 adding a new vertex to $\cl_{k,k}$ and joining all
vertices of one part of it to the new vertex. The adjacency matrix of a bipartite graph $G$ can be rearranged so that it has the form
 $$\left(
    \begin{array}{cc}
      O & N \\
      N^\p & O \\
    \end{array}
  \right),$$
  where the zero matrices $O$ are square.
  We denote by $J_{r,\ell}$ the all 1 matrix with $r$ rows and $\ell$ columns, and by $J_r$ if it is square. When the order of the matrix is clear from the context, we drop the subscripts.
   The matrix $N$ is called the {\em bipartite adjacency matrix} of $G$.
  The bipartite graphs of order $n=2k+1$ with  bipartite adjacency matrices
$$\left(
  \begin{array}{cc}
  I_{k-3} &   J \\
    O&  \widetilde{I_3}
\end{array}\right)_{k\times(k+1)},~~~
\left(
  \begin{array}{cc}
  \widetilde{I_{k-3}} &   J \\
    O&  J_3-I_3
\end{array}\right)_{k\times(k+1)},$$
are denoted by $\R_n$ and $\q_n$, respectively, where $\widetilde{I_\ell}$ is the  matrix resulting from extending the identity matrix by an all 1 column vector,
i.e. $$\widetilde{I_\ell}=\left(\begin{array}{rl}I_\ell & {\bf1}_\ell \\\end{array}\right).$$
 The graphs $\R_{13}$ and $\q_{13}$ are depicted in Figure~\ref{rq}.
\begin{figure}
\centering \includegraphics[width=13cm]{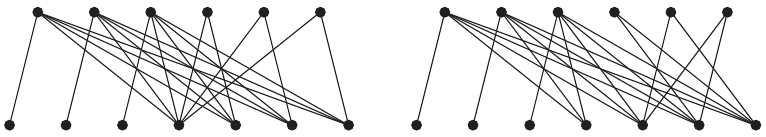}
 \caption{The graphs $\R_{13}$ (left) and $\q_{13}$ (right)}\label{rq}
  \end{figure}

A {\em combinatorial design} is a pair $(X, {\cal B})$ where $X$ is a set of points and ${\cal B}$
 is a collection of subsets of $X$, called blocks, together with an incidence relation between the points and the blocks.
A {\em balanced incomplete block design} BIBD$(b,\, v,\, r,\, k,\, \la)$ is a combinatorial design with $v$ points and $b$ blocks all of which have the same size $k$ and the incidence relation that  any 2-subset of $X$ is contained in exactly $\la$ blocks where $0\le\la$ and $k\le v-1$. It follows that every element of $X$ is contained
in a same number $r$ of blocks.
Necessary conditions for the existence of a BIBD$(b,\, v,\, r,\, k,\, \la)$ are
\begin{align}
  vr &= bk \label{bk},\\
  r(k-1)&=\la(v-1). \label{r(k-1)}
\end{align}
A BIBD$(b,\, v,\, r,\, k,\, \la)$ with $b=v$ (and so $r=k$) is called a symmetric $(v,\, k,\,\la)$-design. It is known that in a symmetric $(v,\, k,\,\la)$-design
two distinct blocks $B_i, B_j$  $(1\le i,j\le v)$ intersect in $\la$ elements.
A {\em pseudo $(v,\, k,\, \la)$-design} is a pair $(X, {\cal B})$ where $X$ is a $v$-set and ${\cal B}=\{B_1,\ldots,B_{v-1}\}$ is
a collection of $k$-subsets (blocks) of $X$ such that each two distinct $B_i, B_j$ intersect in $\la$ elements; and
$0\le\la <k \le v-1$.
Each combinatorial design is completely determined by its corresponding
{\em incidence matrix}; this is the $(0, 1)$-matrix $A=(a_{ij})$ whose rows and columns are indexed by the blocks and the points  of the design, respectively, where $a_{ij}=1$ if
$x_j\in B_i$ and $a_{ij}=0$ if
$x_j\not\in B_i$. The {\em incidence graph} of a design $\D$ is a bipartite graph such that its bipartite adjacency matrix is the incidence matrix of $\D$.

\noi{\bf Remark.} In order to avoid trivial cases, we assume that the designs considered in this paper(and so
their incidence graphs) are connected. Therefore, the Perron--Frobenius theorem (cf. \cite[p. 178]{gr})
can be applied. It follows that the largest singular value has
multiplicity one and a positive eigenvector. Another consequence is that if $N$ is the bipartite adjacency matrix of a connected bipartite graph of order $n$ with five distinct eigenvalues where the zero eigenvalue is of multiplicity 1, then the characteristic polynomial of $NN^\p$ is of the form $x(x^2-a)^{\frac{n-3}{2}}(x^2-b)$. As mentioned in \cite{ds}, it turns out that if $n>5$, then $a$ and $b$ must be integers. For $n=5$, there are only two such graphs with the following bipartite adjacency matrices:
$$\left(\begin{array}{ccc}1 & 1 & 1 \\0& 0 & 1 \\\end{array}\right)~~\hbox{and}~~
\left(\begin{array}{ccc}1 & 1 & 1 \\0& 1 & 1 \\\end{array}\right).$$
The spectra of these two graphs are $\{0,\pm\sqrt{2\pm\sqrt{2}}\}$ and $\{0,\pm\frac{1}{2}\sqrt{10\pm2\sqrt{17}}\}$, respectively.
Therefore, if $0<\theta\le\sqrt{2}$ and $\{0,(\pm \theta)^{\frac{n-3}{2}}\}$ is contained in the spectrum of a graph of order $n>5$, then $\theta=1$ or $\theta=\sqrt{2}$.

\section{Pseudo $(v,\,k,\,\la)$-designs}

Pseudo designs were studied by Marrero \cite{m74,m77} and Woodall \cite{w}.
Woodall used another terminology; he called pseudo designs {\em near-square $\la$-linked designs}.\footnote{A {\em square $\la$-linked designs} consists of  $v$ points and $v$
blocks such that each two distinct blocks intersect in $\la$ elements. This configuration ia called $\la$-design by Ryser \cite{r} if in addition there exist two blocks with different sizes.}
We follow the terminology of Marrero.

A pseudo $(v,\, k,\, \la)$-design is called
{\em primary} if $v\la\ne k^2$ and is called  {\em nonprimary} when $v\la=k^2$. It is shown that \cite{mb} in a nonprimary pseudo design,  $v=2k$. Thus a  pseudo   $(v,\, k,\, \la)$-design is nonprimary if and only if $v=4\la$ and $k=2\la$.

  The existence of a nonprimary pseudo $(v,\, k,\, \la)$-design is equivalent to existence of a Hadamard design:
\begin{thm} \label{noprim}{\rm(Marrero--Butson \cite{mb})}  The incidence matrix of a given pseudo
$(4\la, 2\la, \la)$-design can always be obtained from the incidence matrix $A$ of a symmetric
$(4\la-1, 2\la-1, \la-1)$-design by adjoining one column of all $1$'s to $A$
and then possibly complementing some rows of $A$.
\end{thm}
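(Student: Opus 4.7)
The plan is to reverse the construction. Let $M$ be the incidence matrix of a given pseudo $(4\lambda,2\lambda,\lambda)$-design, so $M$ has size $(4\lambda-1)\times 4\lambda$. I would single out an arbitrary column $c$ of $M$, complement each row $i$ for which $M_{ic}=0$, and call the result $M'$. By construction the $c$-th column of $M'$ is identically $1$. Let $A$ be the matrix obtained from $M'$ by deleting column $c$; the goal is then to show that $A$ is the incidence matrix of a symmetric $(4\lambda-1,2\lambda-1,\lambda-1)$-design.

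The core verification is a routine check that both the row sums and the pairwise inner products of rows of $M$ are preserved by the row-complementing step. Since each row of $M$ has $2\lambda$ ones among $4\lambda$ entries, the row sum remains $4\lambda-2\lambda=2\lambda$ after complementing; hence each row of $A$ has $2\lambda-1$ ones. For distinct rows $i\ne j$, the original inner product is $\lambda$, and in each of the four cases $(M_{ic},M_{jc})\in\{0,1\}^2$ a direct calculation shows that the inner product in $M'$ is still $\lambda$; the ``both complemented'' case uses the identity $4\lambda-2(2\lambda)+\lambda=\lambda$, where the nonprimary relation $v=2k$ is essential. Therefore $M'(M')^\top=\lambda(I+J)$ and, after deleting the all-ones column, $AA^\top=\lambda I+(\lambda-1)J$.

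To finish, I would invoke the standard symmetry theorem for symmetric designs: a square $(0,1)$-matrix with constant row sum $k$ satisfying $AA^\top=(k-\mu)I+\mu J$ (with $k\ne \mu$) is the incidence matrix of a symmetric $(v,k,\mu)$-design. Taking $v=4\lambda-1$, $k=2\lambda-1$, and $\mu=\lambda-1$ delivers the required Hadamard design, and reversing the procedure (adjoin an all-$1$ column to $A$ and complement those same rows) recovers $M$.

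There is no genuine obstacle here; once the complementing trick is chosen, everything reduces to bookkeeping. The conceptual point worth emphasising is \emph{why} the procedure works only in the nonprimary case: the equality $v=2k$ makes the complementation self-dual for both block sizes and pairwise intersections, while for a primary pseudo design ($v\lambda\ne k^2$) no such clean reduction to a symmetric design is available.
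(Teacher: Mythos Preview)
Your argument is correct. The bookkeeping in all four cases checks out, and the appeal to the standard Ryser-type characterisation of symmetric designs via $AA^\top=(k-\mu)I+\mu J$ with constant row sums is the right way to finish. One small remark: the nonprimary relation is used not only in the ``both complemented'' case but also in the mixed case, where the new inner product is $k-\lambda=2\lambda-\lambda=\lambda$; so the specialness of $k=2\lambda$ enters twice, not once.

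As for comparison with the paper: there is nothing to compare against. The paper does not prove Theorem~\ref{noprim}; it is quoted as a known result of Marrero and Butson \cite{mb} and only illustrated by the Fano-plane example that follows the statement. Your write-up is essentially the standard proof one finds in the original source, so it would serve perfectly well as a self-contained replacement for the citation.
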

In the theorem, complementing a row means that $0$'s and $1$'s are interchanged in that row. For example, take the Fano plane which is the unique symmetric
$(7,\,3,\,1)$-design with points $\{1,\ldots,7\}$ and blocks
$\{124, 235, 346 , 457 , 156 , 267 ,137\}$. Now the theorem asserts that by adding a new point to all the blocks, namely 8,
and complementing any set of blocks we get a pseudo $(8,\,4,\,2)$-design. E.g. if we do this for the first block we have the pseudo design with blocks
 $\{3567, 2358, 3468, 4578, 1568, 2678,1378\}$.
 For primary pseudo designs we have the following:
\begin{thm}\label{4types} {\rm (Marrero \cite{m74, m77})} The incidence matrix $A$ of a primary pseudo $(v,\, k, \,\la)$-design $\D$ can be obtained from the
incidence matrix of a symmetric $(\bar v,\,\bar k, \bar\la)$-design whenever  $\D$ satisfies  one of the following
 arithmetical conditions on its parameters.
 \begin{itemize}
  \item[\rm(i)] If $(k-1)(k-2)=(\la-1)(v-2)$, then $A$ is obtained by adjoining a column of $1$'s to the incidence matrix of a symmetric $(v-1,\,k-1,\,\la-1)$-design.
  \item[\rm(ii)] If $k(k-1)=\la(v-2)$, then $A$ is obtained by adjoining a column of $0$'s to the incidence matrix of a symmetric  $(v,\,k,\,\la)$-design.
  \item[\rm(iii)] If $k(k-1)=\la(v-1)$, then $A$ is obtained from discarding a row from the incidence matrix of a symmetric $(v,\,k,\,\la)$-design.
  \item[\rm(iv)] If $k=2\la$, then $A$ is obtained from the incidence matrix  $B$ of a symmetric  $(v,\,k,\,\la)$-design as follows: a row is discarded from $B$ and then the $k'$
  columns of $B$ which had a $1$ in
the discarded row are complemented ($0$'s and $1$'s are interchanged in these
columns).
\end{itemize}
\end{thm}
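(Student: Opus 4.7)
The plan is to extract from the incidence matrix $A$ enough algebraic structure to pin down the replication pattern of $\D$, and then to invert each of the four constructions directly. Let $r=(r_1,\dots,r_v)^\p=A^\p\mathbf{1}_{v-1}$ be the replication vector. The intersection hypothesis gives $AA^\p=(k-\la)I+\la J$; since $k>\la$ this matrix has rank $v-1$, so $\ker A$ is one-dimensional, and the computation $Ar=AA^\p\mathbf{1}=(k+\la(v-2))\mathbf{1}$ exhibits $u:=k\,r-(k+\la(v-2))\mathbf{1}_v$ as a spanning null vector. Writing $N=A^\p A$, the identity $N^2=A^\p(AA^\p)A=(k-\la)N+\la rr^\p$, together with $r\perp u$ and the spectral decomposition
$$
N=(k-\la)I-(k-\la)\frac{uu^\p}{\|u\|^2}+\la(v-1)\frac{rr^\p}{\|r\|^2},
$$
yields on the diagonal a quadratic relation $\al r_i^2+\beta r_i+\gamma=0$ whose leading coefficient $\al$ simplifies to a non-zero multiple of $\la v-k^2$. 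The primary hypothesis $\la v\ne k^2$ therefore forces the $r_i$ to take at most two distinct values $r_1>r_2$, with multiplicities $s$ and $v-s$.

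Next I exploit $Au=0$: for each block $B_\ell$ the sum $\sum_{x_j\in B_\ell}u_j$ vanishes, and since $u_j$ depends only on which of the two values $r_j$ takes, every block meets the ``heavy'' set $S=\{x_j:r_j=r_1\}$ in the same number $s_0=sr_1/(v-1)$ of points. Combining this with $sr_1+(v-s)r_2=(v-1)k$ and the specific arithmetical condition in each case pins $(r_1,r_2,s)$ down uniquely: (i) forces $(v-1,k-1,1)$, so the unique heavy point lies in every block; (ii) forces $(k,0,v-1)$, so the unique light point lies in no block; while (iii) and (iv) both force $(k,k-1,v-k)$ with $s_0=k-\la$.

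The reconstruction is then immediate in the first three cases. In (i) we delete the column of all 1's; the remaining $(v-1)\times(v-1)$ matrix has row and column sums $k-1$ and pairwise row intersections $\la-1$, hence is the incidence matrix of a $(v-1,k-1,\la-1)$-design. Case (ii) is dual, removing the all-zero column to obtain a $(v-1,k,\la)$-design. In (iii) one appends the indicator vector of the $k$ light points as a new row; since $s_0=k-\la$, the new row meets every existing block in exactly $\la$ points, yielding a $(v,k,\la)$-design. The delicate case is (iv), where the replication data coincides with (iii) and the extra hypothesis $k=2\la$ (forcing $v=4\la-1$, so $v-k=k-1$) permits a different inversion: one first complements the $k$ columns indexed by the light points, which preserves constant row sums $k$, and an inclusion--exclusion count on $B_\ell\cap B_m\cap S$ shows that pairwise row intersections stay equal to $\la$ precisely because $k-2\la=0$. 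After complementation the matrix has constant column sums $k$, and appending a row supported on the same $k$ columns produces a $(v,k,\la)$-design. Verifying this cancellation---the step where the hypothesis $k=2\la$ is truly essential---is the main technical obstacle.
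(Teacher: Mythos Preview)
The paper does not prove Theorem~\ref{4types}; it is quoted from Marrero \cite{m74,m77} (together with Theorem~\ref{prim} from Marrero and Woodall) and used as a black box. So there is no ``paper's own proof'' to compare against. What you have written is essentially a sketch of Marrero's original argument: the rank computation $AA^\p=(k-\la)I+\la J$, the one-dimensional kernel spanned by $u=kr-(k+\la(v-2))\mathbf 1$, the spectral decomposition of $N=A^\p A$ leading to a quadratic satisfied by the column sums, and then case-by-case inversion of the four constructions. That is the right architecture.

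A few remarks on details. In case~(i) you obtain a $(v-1,k-1,\la-1)$-design and in case~(ii) a $(v-1,k,\la)$-design; the parameters printed in the statement of Theorem~\ref{4types} in this paper appear to contain typos in those two places, and your versions are the ones consistent with the arithmetical conditions. The main gap in your outline is case~(iv): you assert that $k=2\la$ together with primariness ``forces $v=4\la-1$'', but this is not immediate from the hypotheses and needs an argument (it comes out of solving the quadratic for $r_i$ and imposing that the two roots be non-negative integers summing correctly, or equivalently from the BIBD decomposition of Theorem~\ref{prim}). Until you establish $v=4\la-1$ you cannot conclude that the replication pattern in~(iv) coincides with that in~(iii), and the subsequent complementation count depends on this. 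The inclusion--exclusion verification you describe at the end is correct in spirit, but you should write it out: if $B_\ell\cap B_m$ has $a$ heavy points and $\la-a$ light points, the intersection after complementing the light columns becomes $a+(k-2(k-s_0)+(\la-a))=\la+(2s_0-k)=\la+(k-2\la)$, which equals $\la$ exactly when $k=2\la$.
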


It was conjectured by O. Marrero \cite{m74,m77} that given a primary pseudo $(v,\, k,\, \la)$-design,
  then `completion' or `embedding' between the given pseudo design and some symmetric $(\bar v,\, \bar k,\, \bar\la)$-design always is possible.
   In other words:
\begin{conj}\label{conj} {\rm (Marrero \cite{m74, m77})}  The parameters of a given primary pseudo $(v,\, k,\, \la)$-design  satisfy at least one of the four conditions of Theorem~\ref{4types}.
\end{conj}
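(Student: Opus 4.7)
The plan is to reduce the conjecture to a classification of the possible replication sequences of a primary pseudo design via spectral identities for its incidence matrix.

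Let $N$ be the $(v-1)\times v$ incidence matrix of the primary pseudo design, write $r_i$ for the $i$-th column sum, and let $\la_{ij}$ be the number of blocks through the distinct points $i,j$. From $NN^\p=(k-\la)I+\la J$ the nonzero eigenvalues of both $NN^\p$ and $N^\p N$ are $k+\la(v-2)$ (simple) and $k-\la$ (multiplicity $v-2$), and $R:=N^\p\mathbf{1}$ is an eigenvector of $N^\p N$ for $k+\la(v-2)$. These data yield the matrix identity $(N^\p N)^2=(k-\la)N^\p N+\la RR^\p$, whose trace and diagonal produce the moment identities
\[
\sum_i r_i=k(v-1),\qquad \sum_i r_i^{2}=(v-1)\bigl(k+\la(v-2)\bigr),
\]
\[
\sum_{j\ne i}\la_{ij}=(k-1)r_i,\qquad \sum_{j\ne i}\la_{ij}^{2}=(k-\la)r_i+(\la-1)r_i^{2}.
\]

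Next I would invoke Woodall's theorem \cite{w} that $\{r_i\}$ takes at most two distinct values $r<s$ with multiplicities $a$ and $v-a$. Substituting into the first two identities leaves one degree of freedom in $(r,s,a)$, and eliminating $a$ produces the single hyperbolic relation
\[
vrs=(v-1)\bigl(k(r+s-1)-\la(v-2)\bigr).
\]
The heart of the plan is to show that among $\{0,1\}$-matrix realizable integer solutions, only the four triples
\[
(r,s,a)=(k-1,v-1,v-1),\ (0,k,1),\ (k-1,k,k),\ (v-k,k,k)
\]
occur; these are exactly the replication patterns of the constructions (i)--(iv) of Theorem~\ref{4types}, and substitution of each back into $\sum r_i^{2}=(v-1)(k+\la(v-2))$ immediately returns the corresponding arithmetic identity $(k-1)(k-2)=(\la-1)(v-2)$, $k(k-1)=\la(v-2)$, $k(k-1)=\la(v-1)$, or $k=2\la$.

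The decisive difficulty is the reduction from the one-parameter curve of candidate $(r,s)$ down to these four points. The per-point identities give, at each $i$, the sum and sum-of-squares of the $v-1$ nonnegative integers $\la_{ij}$, so Cauchy--Schwarz and integrality together produce an inequality of the form $r_i\bigl((k-1)^{2}-(v-1)(\la-1)\bigr)\le (v-1)(k-\la)$, with analogous bounds obtained by summing over the full $s$- or $r$-class. Chaining these against the hyperbolic relation should cut the candidate locus to a finite set, but squeezing that set down to exactly the four stated triples, and simultaneously handling the degenerate cases $r=0$, $s=v-1$, and $r=s-1$, seems to require genuinely combinatorial rather than purely arithmetical arguments. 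This is presumably the step at which Marrero's conjecture remains only partially confirmed, consistent with the abstract's promise of partial results.
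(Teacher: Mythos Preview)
The statement you are trying to prove is labeled in the paper as a \emph{conjecture} of Marrero, and the paper does not claim a proof of it. What the paper actually establishes is only the special cases $k-\la\in\{1,2\}$ (stated as a Corollary to Theorem~\ref{pseudo}), so there is no ``paper's own proof'' of the full statement to compare against.

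For those two special cases the paper's route is quite different from yours. It uses not merely Woodall's two-value fact about the column sums, but the full structural content of Theorem~\ref{prim}: the columns in each replication class themselves form a $\B$ with $\bar r-\bar\la=k-\la$. Lemma~\ref{bibd} then classifies all $\B$'s with $\bar r-\bar\la\le2$, and pasting the short finite lists together forces the parameters into one of the four patterns of Theorem~\ref{4types}. This is a combinatorial classification argument, not an arithmetical one, and it works precisely because $k-\la$ is tiny.

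Your outline is a reasonable sketch of how one might attack the general conjecture, and your identification of the four target triples $(r,s,a)$ with the four constructions is correct. But the step you yourself flag as ``decisive''---cutting the one-parameter family of $(r,s,a)$ satisfying the hyperbolic relation down to exactly those four integer points using only Cauchy--Schwarz on the $\la_{ij}$---is not a proof, and you give no argument that it can be completed. Since the conjecture is, to the paper's knowledge, open beyond the cases it settles, this is not a gap you should expect to close by the method described; the honest status of your write-up is ``heuristic outline that recovers the known necessary identities but does not prove the conjecture,'' which is consistent with what you say in your last paragraph.
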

He proved the validity of his conjecture for $\la=1$.

\begin{thm}\label{prim} {\rm (Marrero \cite{m77}, Woodall \cite{w})} Let $A$ be the incidence matrix of a given primary pseudo
$( v,\,  k,\, \la)$-design, so that $A$ has two distinct column sums $s_1$ and $s_2$. Let $y=
(k + \la( v - 2)- ks_2)/(s_1 - s_2)$, and let $f$ be the number of columns of $A$ having column
sum $s_1$. Then, after an appropriate permutation of the columns of $A$, it must be
possible to write $A=[M_{v-1,f}~ N_{v-1,v-f}]$, where $M$ is the incidence matrix of a
$\B(\bar b =  v - 1,\, \bar v = f,\, \bar r = s_1,\, \bar k = y,\, \bar\la = s_1-k +\la)$ and $N$ is the incidence
matrix of a $\B(\bar b=v-1,\, \bar v=v-f,\, \bar r=s_2,\, \bar k=k-y,\, \bar\la=s_2-k+\la)$. (Note that $f$ may take the values $1$ or $v-1$, too.)\end{thm}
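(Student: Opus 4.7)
The plan is to analyse the spectral structure of $A^\p A$. The pseudo-design property gives $A A^\p = (k-\la)I_{v-1} + \la J_{v-1}$, which is nonsingular since primality forces $k > \la$ and $k + (v-2)\la > 0$. Hence $A$ has rank $v-1$, and $A^\p A$ has eigenvalues $k + \la(v-2)$, $k - \la$, and $0$ with multiplicities $1$, $v-2$, $1$. Setting $\mathbf{c} := A^\p\mathbf{1}$, one checks $A^\p A\,\mathbf{c} = (k + \la(v-2))\mathbf{c}$; let $\mathbf{u}$ span the null space and let $E$ be the $(k-\la)$-eigenspace, so that $E^\perp = \te{span}(\mathbf{c},\mathbf{u})$.

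Let $\mathbf{1}_M, \mathbf{1}_N$ denote the indicator vectors of the $f$ columns of sum $s_1$ and the $v-f$ columns of sum $s_2$, so that $\mathbf{c} = s_2\mathbf{1} + (s_1-s_2)\mathbf{1}_M$. Substituting this expression into $A^\p A\,\mathbf{1} = k\mathbf{c}$ and $A^\p A\,\mathbf{c} = (k + \la(v-2))\mathbf{c}$ yields
\[
A^\p A\,\mathbf{1}_M \;=\; y\,\mathbf{c}, \qquad A^\p A\,\mathbf{1}_N \;=\; (k-y)\,\mathbf{c},
\]
with $y$ as in the statement. The crucial step is to promote this to $\mathbf{1}_M \in E^\perp$: writing $\mathbf{1}_M = p + q$ with $p \in E$ and $q \in E^\perp$, invariance of the eigenspaces under $A^\p A$ gives $(k-\la)p + A^\p A q = y\mathbf{c} \in E^\perp$, whence $(k-\la)p \in E \cap E^\perp = \{0\}$ and $p = 0$. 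The same argument applied to $\mathbf{1}$ places $\mathbf{1}_N$ in $E^\perp$, so in fact $E^\perp = \te{span}(\mathbf{1}_M, \mathbf{1}_N)$. In particular, the null vector $\mathbf{u}$ itself takes only two values, say $\al$ on $M$-columns and $\beta$ on $N$-columns.

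The conclusion then falls out of the spectral expansion
\[
A^\p A \;=\; (k-\la)\,I \;+\; \frac{\la(v-1)}{\|\mathbf{c}\|^2}\,\mathbf{c}\mathbf{c}^\p \;-\; \frac{k-\la}{\|\mathbf{u}\|^2}\,\mathbf{u}\mathbf{u}^\p,
\]
in which both rank-one correction terms are block-constant with respect to the partition $(M,N)$. Hence $M^\p M = (k-\la)I_f + c_M J_f$ for some $c_M$, and reading off the diagonal entries (which all equal $s_1$) forces $c_M = s_1 - k + \la$; similarly $N^\p N = (k-\la)I_{v-f} + (s_2 - k + \la)J_{v-f}$. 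Finally, $A\mathbf{u} = 0$ together with $\mathbf{u} = \al\mathbf{1}_M + \beta\mathbf{1}_N$ and $\al \ne \beta$ (else $\mathbf{u} \propto \mathbf{1}$, contradicting $A\mathbf{1} = k\mathbf{1} \ne 0$) yields $(\al-\beta)r_i^M = -\beta k$ for every row $i$, so the row sums of $M$ are constant; by the count $(v-1)y = f s_1$ this common value equals $y$, and the row sums of $N$ are then $k-y$. Thus $M$ and $N$ are the incidence matrices of the claimed BIBDs. I expect the main obstacle to be the spectral step establishing $\mathbf{1}_M \in E^\perp$; once that is in hand, the identification of the BIBD parameters is essentially arithmetic, using only the identities $f s_1 + (v-f)s_2 = (v-1)k$ and $(v-1)y = f s_1$.
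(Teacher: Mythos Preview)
The paper does not supply a proof of this theorem; it is quoted as a known result of Marrero and Woodall, so there is no in-paper argument to compare against.

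Your spectral argument is essentially correct and self-contained. The key step---showing that $A^\p A\,\mathbf 1_M\in\te{span}(\mathbf c)$ forces $\mathbf 1_M$ itself into the two-dimensional space $E^\perp$---is exactly what is needed: once $\mathbf 1_M,\mathbf 1_N$ span $E^\perp$, the null vector $\mathbf u$ is constant on each column class, the rank-one corrections $\mathbf c\mathbf c^\p$ and $\mathbf u\mathbf u^\p$ in the spectral expansion of $A^\p A$ are block-constant, and the BIBD structure of $M$ and $N$ follows. The verification $(v-1)y=fs_1$ can be read off from $\mathbf 1^\p(A^\p A\,\mathbf 1_M)=k\,\mathbf 1^\p(y\mathbf c)$, since $A\mathbf 1=k\mathbf 1$ and $\mathbf 1^\p\mathbf c=(v-1)k$; you might make this explicit. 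One small caveat: your claim that $A^\p A$ has three \emph{distinct} eigenvalues $k+\la(v-2)$, $k-\la$, $0$ tacitly assumes $\la>0$. When $\la=0$ the first two coincide and $E^\perp$ is only one-dimensional. However, $\la=0$ in a pseudo $(v,k,\la)$-design forces $(v-1)k\le v$ and hence $k=1$, a trivial case that can be handled by inspection, so this is not a real gap.

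For comparison, the original proofs of Woodall and Marrero proceed more combinatorially: they first show directly (by manipulating $AA^\p=(k-\la)I+\la J$ and column-sum identities) that a primary pseudo design has exactly two column sums and that each row meets the $s_1$-columns in a fixed number $y$ of points, and only then identify the BIBD parameters. Your route through the eigenspace decomposition of $A^\p A$ is cleaner and more conceptual; in particular it explains \emph{why} there are at most two column sums (namely, because $\dim E^\perp=2$), rather than taking this as a hypothesis.
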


In order to study the graphs of the subject of this paper, we need to characterize pseudo designs with $k-\la=1$ or $2$. To do so, we need the following lemma.
\begin{lem}\label{bibd} Let $\D$ be a $\B(b,\,v,\,r,\,k,\,\la)$.
\begin{itemize}
  \item[\rm(i)] If $r=\la+1$, then $\D$ is either the symmetric $(v,\,1,\,0)$-design or the symmetric $(v,\,v-1,\,v-2)$-design.
   \item[\rm(ii)] If $r=\la+2$, then $\D$ is one of the $\B(2v,\,v,\,2,\,1,\,0)$, $\B(6,\,4,\,3,\,2,\,1)$, $\B(6,\,3,\,3,\,2,\,2)$, the symmetric $(7,\,3,\,1)$-design, or the symmetric $(7,\,4,\,2)$-design.
  \end{itemize}
\end{lem}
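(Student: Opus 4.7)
The plan is to combine the standard BIBD identities $vr = bk$ and $r(k-1) = \la(v-1)$ with the hypothesis $r = \la+1$ (part (i)) or $r = \la+2$ (part (ii)), and to reduce the resulting Diophantine equations to a short finite case analysis, using Fisher's inequality $b \ge v$, equivalently $r \ge k$, as the main tool.

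For part (i), substituting $r = \la + 1$ into $r(k-1) = \la(v-1)$ yields $\la(v-k) = k-1$. If $\la = 0$, then $k = 1$ and $\D$ is a $(v,1,0)$-design. If $\la \ge 1$, then Fisher gives $\la \ge k - 1$; combined with $\la(v-k) = k-1$ this forces $v - k \le 1$, because otherwise $v - k \ge 2$ would imply $\la \le (k-1)/2 < k - 1$ for $k \ge 2$ (and $k = 1$ is already excluded since it forces $\la = 0$). Since $v > k$, we conclude $v - k = 1$, so $k = v-1$, $\la = v-2$ and $r = k$, and $\D$ is the symmetric $(v,v-1,v-2)$-design.

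For part (ii), the analogous substitution yields $\la(v-k) = 2(k-1)$. The case $\la = 0$ forces $k = 1$ and $r = 2$, which gives the family $\B(2v,v,2,1,0)$. When $\la \ge 1$, Fisher gives $\la + 2 \ge k$, while the equation $\la(v-k) = 2(k-1)$ shows that $d := v - k = 2(k-1)/\la$ must be a positive integer. I would then enumerate the admissible triples $(v,k,\la)$ by increasing $d$: each value of $d$ expresses $\la$ as a function of $k$ via $\la d = 2(k-1)$, Fisher pins $\la \ge k - 2$, and the further integrality requirement $b = v(\la+2)/k \in \mathbb{Z}$ cuts the list to finitely many candidates. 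One then checks that these correspond exactly to the four remaining designs $\B(6,4,3,2,1)$, $\B(6,3,3,2,2)$, $(7,3,1)$-design and $(7,4,2)$-design listed in the lemma.

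The main obstacle lies in part (ii): whereas in (i) the single Fisher bound $\la \ge k-1$ immediately closes the argument, here $\la$ is only bounded below by $k-2$ while having the much weaker upper bound $\la \le 2(k-1)$, so one must simultaneously manage two divisibility conditions (for $d = v-k$ and for $b$) together with Fisher. Organizing the enumeration by ascending $d = v - k$ seems the cleanest route, as it isolates the sporadic cases at small $k$ while ruling out larger $k$ by failure of integrality or of Fisher's inequality.
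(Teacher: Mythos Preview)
Your treatment of part~(i) is correct and is essentially the paper's argument reorganised: the paper splits into the cases $b>v$ and $b=v$, while you work directly with the rearranged identity $\la(v-k)=k-1$ together with the Fisher bound $\la\ge k-1$. The content is the same.

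For part~(ii) the overall strategy also matches the paper's---combine the two identities with Fisher and reduce to a short case check---but the paper organises the cases by the value of $\la$ after first separating $b>v$ from $b=v$, whereas you propose to enumerate by $d=v-k$. Your sketch, however, hides a real gap at $d=1$: there $\la=2(k-1)$, $v=k+1$, $r=2k$ and $b=2(k+1)$, and for \emph{every} $k\ge2$ this parameter set satisfies both the non-strict Fisher bound $\la\ge k-2$ and your integrality test $b=v(\la+2)/k\in\mathbb Z$. So the two filters you name do not by themselves cut the list to finitely many candidates. To make the enumeration finite you should first split off the symmetric case $b=v$ (which forces $\la=k-2$, hence $d=2+2/(k-2)\in\mathbb Z$ and yields the $(7,3,1)$- and $(7,4,2)$-designs), and in the non-symmetric case $b>v$ use the sharper bound $\la\ge k-1$ to get $d\le2$; even then $d=1$ survives arithmetically for all $k\ge2$, and excluding $k\ge3$ requires a separate non-existence argument for the parameter family $\B(2(k+1),\,k+1,\,2k,\,k,\,2k-2)$. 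The paper's own proof meets this same obstruction in its sub-case ``$\la\ge3$ even, $k-1=\la/2$'' and disposes of it with a one-line appeal to~(\ref{r(k-1)}); whatever one thinks of that step, the point is that this is precisely where more than the arithmetic you describe is needed, and your write-up should address it explicitly rather than asserting that the check goes through.
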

\begin{proof}{The part (i) is straightforward. We prove  (ii).  First let $b>v$. So $\la+1=r-1\ge k$. By (\ref{r(k-1)}), $(\la+2)(k-1)=\la(v-1)$. If $\la=0$, then $r=2$ and $k=1$. So, by (\ref{bk}), $b=2v$ which means $\D$ is
$\B(2v,\,v,\,2,\,1,\,0)$. If $\la=1$, then $r=3$ and so $v=3k-2$. We have $k\ne1$ since otherwise $v=1$ which is impossible. Thus $k=2$. It turns out that
$\D$ is $\B(6,\,4,\,3,\,2,\,1)$. If $\la=2$, then $r=4$ and so $v=2k-1$. By (\ref{bk}), $bk=4(2k-1)$ from which it follows that $k=2$ and thus $b=6$.
 Hence $\D$ is $\B(6,\,3,\,3,\,2,\,2)$. Let $\la\ge3$. If $\la$ is odd, then $\la\mid k-1$ and thus $\la=k-1$.
 If $\la$ is even, then $\frac{\la}{2}\mid k-1$. Since $\la\ge k-1$, it follows that either $k-1=\la$ or $k-1=\frac{\la}{2}$, the latter is impossible due to (\ref{r(k-1)}).
 Therefore $(k+1)(k-1)=(k-1)(v-1)$ so $v=k+2$.
  On the other hand, $bk=(k+1)(k+2)$ which is impossible since $k\ge4$. Now let $b=v$. So $r=k=\la+2$. We have $(\la+2)(\la+1)=\la(v-1)$.
  Clearly $\la\ne0$. If $\la=1$, then $k=3$ and $v=7$. So $\D$ is the symmetric $(7,\,3,\,1)$-design. If $\la=2$, then $k=4$ and $v=7$. So $\D$ is the symmetric $(7,\,4,\,2)$-design.}
\end{proof}

\begin{thm}\label{pseudo} Let $\D$ be a pseudo $(v,\, k,\, \la)$-design.
\begin{itemize}
  \item[\rm(i)] If $k=\la+1$,  then $\D$ is obtained  from the symmetric $(v-1,\,1,\,0)$-design or the symmetric $(v-1,\, v-2,\,  v-3)$-design by either adding an isolated point or a point which belongs to all of the blocks.
  \item[\rm(ii)] If $k=\la+2$, then, up to isomorphism, $\D$ is one of the $\D_i=(\{1,\ldots,8\},{\cal B}_i)$, $i=1,2,3,4$, where
\begin{align*}
    {\cal B}_1=&\{1238,\,1458 ,\,1678 ,\,3568 ,\,2478 ,\,3468 ,\,2568 \},\\
     {\cal B}_2=&\{4567,\,1458 ,\,1678 ,\,3568 ,\,2478 ,\,3468 ,\,2568 \},\\
      {\cal B}_3=&\{4567,\,2367 ,\,1678 ,\,3568 ,\,2478 ,\,3468 ,\,2568 \},\\
       {\cal B}_4=&\{4567,\,2367 ,\,2345 ,\,3568 ,\,2478 ,\,3468 ,\,2568 \};
\end{align*}
or is obtained by omitting one block either from the unique symmetric $(7,\,4,\,2)$-design or the unique symmetric $(7,\,3,\,1)$-design.
\end{itemize}
 \end{thm}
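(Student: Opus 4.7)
The plan is to split $\mathcal{D}$ into the nonprimary and primary cases, apply Theorem~\ref{noprim} in the former and Theorem~\ref{prim} in the latter, and in both cases reduce to Lemma~\ref{bibd} via a single key identity.

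\emph{Nonprimary case.} A nonprimary pseudo design has $v=2k$ and $k=2\lambda$, so the hypothesis $k=\lambda+t$ forces $(v,k,\lambda)=(4t,2t,t)$. For $t=1$ this gives $(v,k,\lambda)=(4,2,1)$, and Theorem~\ref{noprim} produces $\mathcal{D}$ from the trivial $(3,1,0)$-design by adjoining a column of $1$'s and complementing some rows; direct inspection matches the result with the constructions of (i) at $v=4$. For $t=2$ one has $(v,k,\lambda)=(8,4,2)$, and Theorem~\ref{noprim} builds $\mathcal{D}$ from the Fano plane by adjoining a column of $1$'s (introducing point~$8$) and complementing a subset $S$ of the seven rows. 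I would classify the $2^7$ resulting designs by their degree sequences, which (after a short calculation) depend only on $|S|$ and on whether the Fano blocks indexed by $S$, or by its complement, form a pencil or a triangle; this yields exactly four distinct sequences and hence the four designs $\mathcal{D}_1,\ldots,\mathcal{D}_4$.

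\emph{Primary case.} Apply Theorem~\ref{prim} to decompose the incidence matrix as $A=[M\mid N]$, where $M$ and $N$ are BIBDs on $f$ and $v-f$ columns respectively, each with $b=v-1$ blocks. The key identity, immediate from Theorem~\ref{prim}'s formulas $\lambda_{\mathrm{sub}}=s-k+\lambda$ and $r_{\mathrm{sub}}=s$, is that $r_{\mathrm{sub}}-\lambda_{\mathrm{sub}}=k-\lambda$ for each of $M$ and $N$. Hence the hypothesis $k=\lambda+t$ forces both sub-BIBDs to satisfy $r=\lambda+t$, placing them in the regime of Lemma~\ref{bibd}(i) or~\ref{bibd}(ii).

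\emph{Assembling the classification.} For (i), Lemma~\ref{bibd}(i) restricts each sub-BIBD to a $(n,1,0)$- or $(n,n-1,n-2)$-design; matching $b=v-1$ forces $n=v-1$ and hence $f\in\{1,v-1\}$, so one of $M,N$ is a single column. The pseudo design's uniform block size pins this column to all-$0$ (an isolated point) or all-$1$ (a point in every block), and the four combinations recover exactly the four constructions in (i). For (ii), Lemma~\ref{bibd}(ii) supplies the candidate BIBDs; matching $b=v-1$ against their block counts restricts $v$ to $7$ or $8$. For $v=7$ the decomposition $\{M,N\}=\{\mathrm{B}(6,4,3,2,1),\mathrm{B}(6,3,2,1,0)\}$, and the analogous pairing involving $(7,4,2)$-data, reassembles via Theorem~\ref{4types}(iii) into the pseudo $(7,3,1)$- and pseudo $(7,4,2)$-designs obtained by omitting one block from the Fano plane or its complement; the remaining boundary configuration at $v=8$, where one sub-BIBD is a Fano plane or its complement and the other a single constant column, must be shown to coincide with one of the already listed designs by a direct incidence-structure comparison.

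\emph{Main obstacle.} I expect the two most delicate steps to be (a) the isomorphism-class count in the nonprimary $t=2$ case, requiring one to show the $2^7$ row-complementation patterns of the Fano plane collapse to exactly four classes under $\mathrm{PGL}_3(\mathbb{F}_2)$, which I would verify via the degree sequences together with the transitivity of $\mathrm{PGL}_3(\mathbb{F}_2)$ on pencils and on triangles, and (b) the careful reconciliation of the boundary $v=8$ configurations in the primary case with the six designs listed in the conclusion of (ii), where the bookkeeping of the degenerate single-column sub-BIBD and the pseudo design's block-size constraint must be tracked explicitly.
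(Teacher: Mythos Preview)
Your plan matches the paper's proof almost exactly: split into nonprimary and primary cases, invoke Theorem~\ref{noprim} for the former and Theorem~\ref{prim} for the latter, use the identity $\bar r-\bar\lambda=k-\lambda$ to place each sub-BIBD inside Lemma~\ref{bibd}, and then reassemble. The only substantive differences are that the paper disposes of the nonprimary $t=2$ isomorphism count by a \textsf{Maple} check rather than your proposed argument via degree sequences and the action of $\mathrm{PGL}_3(\mathbb{F}_2)$, and that the paper handles the $f=1$ primary subcase for $t=2$ in a single sentence (recording that $N$ is the incidence matrix of a $(7,3,1)$- or $(7,4,2)$-design) without the explicit reconciliation you flag as obstacle~(b); for the $f\ge2$ subcase the paper finds the two pairings $\{\B(6,4,3,2,1),\B(6,3,3,2,2)\}$ and $\{\B(6,4,3,2,1),\B(2\ell-2,\ell-1,2,1,0)\}$ and finishes via Theorem~\ref{4types}(iii)--(iv), which is what your sketch is pointing toward.
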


\begin{proof}{ (i) First, let $\D$ be nonprimary. This is the case only if $\la=1$ and so $k=2$,  $v=4$. By Theorem~\ref{noprim}, $\D$ is obtained from a symmetric $(3,\,1,\,0)$-design
 by the technique described in Theorem~\ref{noprim}.  Applying this technique, it turns out that $\D$ is either the symmetric $(3,\,1,\,0)$-design with a point added to all of its blocks
 or the symmetric $(3,\,2,\,1)$-design with an extra isolated point. Now, let $\D$ be primary. In  view of Theorem~\ref{prim},  $\D$ is obtained by `pasting' two $\B(b_i=v-1,\, v_i,\, r_i,\, k_i,\, \la_i)$
 with $r_i=\la_i+1$. Keeping the notations of Theorem~\ref{prim}, we must have $f=1$. Thus $M$ is either the vector $\bf0$ or $\bf1$ and by Lemma~\ref{bibd},
  $N$ is the incidence matrix of either symmetric $(v-1,\,1,\,0)$-design or symmetric $(v-1,\, v-2,\,  v-3)$-design.

\noi(ii) First, let $\D$ be nonprimary. This is the case only if $\la=2$ and so $k=4$,  $v=8$. By Theorem~\ref{noprim}, $\D$ is obtained from the Fano plane by the technique
 described in Theorem~\ref{noprim}. Making use of the \textsf{Maple} procedure for checking graph isomorphism, it turns out that $\D$ is isomorphic to one of the pseudo
  designs $\D_1,\D_2,\D_3$, or $\D_4$.
Now, let $\D$ be primary.  Thus  $\D$ is obtained by `pasting' two $\B(\bar b_i=v-1,\,\bar v_i,\,\bar r_i,\,\bar k_i,\,\bar \la_i)$'s with $\bar r_i=\bar \la_i+2$ for $i=1,2$.
If $f=1$, then $M$ is either the vector $\bf0$ or $\bf1$ and by Lemma~\ref{bibd}, $N$ is the incidence matrix of
either symmetric $(7,\,3,\,1)$-design, or symmetric $(7,\,4,\,2)$-design. If $f\ge2$, then $M$ and $N$ must be chosen from the incidence matrices of $\B(6,\,4,\,3,\,2,\,1)$, $\B(6,\,3,\,3,\,2,\,2)$, or $\B(2\ell-2,\,\ell-1,\,2,\,1,\,0)$ for some $\ell\ge2$. Since $\bar v_1+\bar v_2=v=\bar b_1+1=\bar b_2+1$, the only possible choices for $M$ and $N$ are that either
 \begin{itemize}
   \item[1)] $M$ is the incidence matrix of $\B(6,\,4,\,3,\,2,\,1)$ and $N$ is that of $\B(6,\,3,\,3,\,2,\,2)$; or
   \item[2)] $M$ is the incidence matrix of $\B(6,\,4,\,3,\,2,\,1)$ and $N$ is that of $\B(2\ell-2,\,\ell-1,\,2,\,1,\,0)$ for $\ell=3$.
 \end{itemize}
 If 1) is the case, then $v=7$, $s_1=3$, $s_2=4$ and so $3k-5\la=y=2$ which  together with $k-\la=2$ give $\la=2$ and $k=4$. Now, $\D$ satisfies the conditions of parts (iii) and
 (iv) of Theorem~\ref{4types}.
 From  part (iii) it follows that $\D$
  is obtained from the symmetric $(7,\,4,\,2)$-design by omitting one of its blocks; and from part (iv)
     we see that  $\D=\{3567,\, 1467,\, 1257,
   \,1236,\, 2347, \,1345,\, 2456\}$ which is again the symmetric $(7,\,4,\,2)$-design with an omitted block.
 If 2) is the case, then $(v,\,k,\,\la)=(7,\,3,1\,)$ and by Theorem~\ref{4types}(iii), $\D$
  is obtained from the symmetric $(7,\,3,\,1)$-design by omitting one of its blocks.}
  \end{proof}

\begin{cor} Conjecture \ref{conj} holds for pseudo $(v,\,k,\,\la)$-designs with $k=\la+1$  or $k=\la+2$.
\end{cor}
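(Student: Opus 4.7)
The plan is to read off the parameters of each primary pseudo design appearing in Theorem~\ref{pseudo} and verify that at least one of the four identities (i)--(iv) of Theorem~\ref{4types} holds. Since Conjecture~\ref{conj} concerns only primary designs, the nonprimary cases (the pseudo $(4,\,2,\,1)$-design when $k=\la+1$, and the designs $\D_1,\ldots,\D_4$ when $k=\la+2$) can be set aside.

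For $k=\la+1$, Theorem~\ref{pseudo}(i) furnishes four configurations: adjoin either an isolated point or a universal point to either a $(v-1,\,1,\,0)$-design or a $(v-1,\,v-2,\,v-3)$-design. The corresponding primary pseudo designs have parameters $(v,\,1,\,0)$, $(v,\,v-2,\,v-3)$, $(v,\,2,\,1)$, and $(v,\,v-1,\,v-2)$, respectively. Substituting into Theorem~\ref{4types}, condition (ii) holds whenever the added point is isolated and condition (i) holds whenever it is universal; in each case the verification reduces to a trivial identity such as $0=0$ or $(v-2)(v-3)=(v-3)(v-2)$.

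For $k=\la+2$, the primary pseudo designs listed in Theorem~\ref{pseudo}(ii) are exactly those obtained by deleting one block from the unique $(7,\,3,\,1)$- or $(7,\,4,\,2)$-design, and so they have parameters $(7,\,3,\,1)$ or $(7,\,4,\,2)$. Both immediately satisfy condition (iii) of Theorem~\ref{4types}, since $3\cdot 2=1\cdot 6$ and $4\cdot 3=2\cdot 6$; the second also satisfies condition (iv). No genuine obstacle arises: all the structural work has already been done inside Theorem~\ref{pseudo}, and the corollary amounts to a short arithmetical check matching each construction to the corresponding template in Theorem~\ref{4types}.
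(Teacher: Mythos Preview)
Your proposal is correct and follows the same approach as the paper: the corollary is stated there without proof, as an immediate consequence of the classification in Theorem~\ref{pseudo}, and you have simply spelled out the routine arithmetical check that each primary case matches one of the templates (i)--(iv) in Theorem~\ref{4types}.
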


\section{Graphs with many $\pm1$ eigenvalues}

 In this section we characterize all graphs of order $n$
whose spectrum contains a zero and $\pm1$ with multiplicity $(n-3)/2$.
We show that this family of graphs consists of $\s_{2k+1}$, $\h_{k,k+1}$, $\R_{2k+1}$, $\q_{2k+1}$ where $n=2k+1$ and two graphs of order 13.

We begin by determining the spectrum of $\s_n$, $\cl_{k,k}$, $\h_{k,k+1}$, $\R_n$, and $\q_n$.

\begin{lem}\label{spec}
\begin{itemize}
  \item[\rm(i)] $\spec(\s_{2k+1})=\left\{\pm\sqrt{k+1},\, 0,\, (\pm1)^{k-1}\right\}$,
  \item[\rm(ii)] $\spec(\cl_{k,k})=\left\{\pm(k-1),\, (\pm1)^{k-1}\right\}$,
  \item[\rm(iii)] $\spec(\h_{k,k+1})=\left\{\pm\sqrt{k^2-k+1},\,0 ,\, (\pm1)^{k-1}\right\}$, for $k\ge2$,
  \item[\rm(iv)] $\spec(\R_{2k+1})=\spec(\q_{2k+1})=\left\{\pm2\sqrt{k-2},\,0 ,\, (\pm1)^{k-1}\right\}$, for $k\ge3$.
\end{itemize}
\end{lem}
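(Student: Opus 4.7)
The approach exploits the common bipartite structure of all five graphs. For a bipartite graph $G$ with bipartite adjacency matrix $N$, the nonzero eigenvalues of $G$ are $\pm\sigma$ for each positive singular value $\sigma$ of $N$ (with matching multiplicities), and the multiplicity of the eigenvalue $0$ equals $|V(G)| - 2\,\mathrm{rank}(N)$. So for each of the five graphs the plan is to read $N$ off from the definition, diagonalize the smaller of $N^\p N$ and $NN^\p$ using the identities $J_\ell^2 = \ell J_\ell$ and $\widetilde{I_\ell}\,\widetilde{I_\ell}^{\,\p} = I_\ell + J_\ell$, and then take positive square roots and pad with zeros to reach the correct total.

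For (i), putting $\{\text{center},\text{leaves}\}$ on one side and the subdivision vertices on the other gives $N = \binom{\mathbf{1}_k^\p}{I_k}$, hence $N^\p N = J_k + I_k$ with spectrum $\{k+1,\,1^{k-1}\}$. For (ii), $N = J_k - I_k$ yields $N^\p N = (k-2)J_k + I_k$ with spectrum $\{(k-1)^2,\,1^{k-1}\}$. For (iii), $N = \binom{J_k - I_k}{\mathbf{1}_k^\p}$ gives $N^\p N = (J_k-I_k)^2 + J_k = (k-1)J_k + I_k$, whose spectrum is $\{k^2-k+1,\,1^{k-1}\}$. A count of eigenvalues then inserts one zero in (i) and (iii) and none in (ii).

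The substantive case is (iv), and the decisive remark is that $\R_{2k+1}$ and $\q_{2k+1}$ produce the \emph{same} Gram matrix $NN^\p$ despite having visibly different $N$'s. Using $\widetilde{I_{k-3}}\,\widetilde{I_{k-3}}^{\,\p} = I_{k-3} + J_{k-3}$ on one side, the identity $(J_3-I_3)^2 = J_3^2 - 2J_3 + I_3 = I_3 + J_3$ on the other, and the matching cross-terms $J\,\widetilde{I_3}^{\,\p} = J(J_3-I_3) = 2J$, a block computation shows that in both cases
\begin{equation*}
NN^\p \;=\; \begin{pmatrix} I_{k-3} + 4J_{k-3} & 2J \\ 2J^\p & I_3 + J_3 \end{pmatrix}.
\end{equation*}
To diagonalize, I would decompose $\mathbb{R}^k$ as $\{(v,0) : \mathbf{1}^\p v = 0\} \oplus \{(0,w) : \mathbf{1}^\p w = 0\} \oplus \langle(\mathbf{1}_{k-3},0),\,(0,\mathbf{1}_3)\rangle$. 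The first two summands are fixed pointwise by $NN^\p$, contributing the eigenvalue $1$ with total multiplicity $(k-4)+2 = k-2$; on the remaining two-dimensional invariant subspace, $NN^\p$ restricts to a $2\times 2$ matrix of trace $4k-7$ and determinant $4k-8$, whose discriminant is $(4k-9)^2$ and whose eigenvalues are therefore $4(k-2)$ and $1$. Hence $\spec(NN^\p) = \{4(k-2),\,1^{k-1}\}$, and the claimed spectra of $\R_{2k+1}$ and $\q_{2k+1}$ follow after taking square roots and adding a single zero to reach $2k+1$ eigenvalues.

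The main obstacle is (iv): one must spot the coincidence $(J_3-I_3)^2 = I_3 + J_3$ that makes the two $NN^\p$'s equal, and then track indices carefully through the block diagonalization. Parts (i)--(iii) reduce, once the bipartition is arranged properly, to one-line Gram-matrix calculations.
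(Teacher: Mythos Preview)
Your proof is correct. It differs from the paper's in that you treat all four parts uniformly via the singular values of the bipartite adjacency matrix, whereas the paper mixes several techniques: interlacing after deleting the high-degree vertex for~(i), the explicit adjacency-matrix form for~(ii), an equitable partition plus interlacing for~(iii), and for~(iv) the observation that $NN^\p-I$ has rank one together with the edge-count identity $\sum\lambda_i^2=2|E|$ to recover the remaining eigenvalue. In~(iv) specifically, the paper's rank-one remark (the displayed matrix equals $\bigl(\begin{smallmatrix}2\mathbf{1}_{k-3}\\ \mathbf{1}_3\end{smallmatrix}\bigr)\bigl(\begin{smallmatrix}2\mathbf{1}_{k-3}\\ \mathbf{1}_3\end{smallmatrix}\bigr)^\p$) instantly gives the eigenvalue $1$ with multiplicity $k-1$ without any block diagonalization, and the edge count yields $\la^2=4(k-2)$ in one line; this is shorter than your trace--determinant computation, though your route has the advantage of being the same machinery used throughout. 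One small caveat: your decomposition in~(iv) tacitly assumes $k\ge4$ (for $k=3$ the first summand would have negative dimension and the span $\langle(\mathbf{1}_{k-3},0),(0,\mathbf{1}_3)\rangle$ collapses to a line); the case $k=3$ is of course immediate from $NN^\p=I_3+J_3$, but you may want to mention it.
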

\begin{proof}{If one deletes the vertex of maximum degree from $\s_{2k+1}$, what remain are $k$ copies of  $K_2$. Thus, by interlacing, the spectrum of $\s_{2k+1}$ contains $\pm1$ of multiplicity at least $k-1$. Since $\s_{2k+1}$ is a bipartite graph of an odd order, it has a zero eigenvalue. Let $\pm\theta$ be the remaining eigenvalues.  As the sum of squares of eigenvalues of a graph is twice the number of edges, we have $2\theta^2+2k-2=4k$ implying $\theta=\sqrt{k+1}$. The spectrum of $\cl_{k,k}$ is easily obtained since it has an adjacency matrix of the form
$$\left(
  \begin{array}{cc}
    O & J_k-I_k \\
    J_k-I_k & O\\
  \end{array}
\right).$$
 The graph $\h_{k,k+1}$ possesses an `equitable partition' with three cells in which each cell consists of the vertices with equal degree. (See \cite[pp. 195--198]{gr} for more information on equitable partitions.) The adjacency matrix of the corresponding quotient is $$B=\left(
    \begin{array}{ccc}
      0 & k-1 & 1 \\
      k-1 & 0 & 0 \\
      k & 0 & 0 \\
    \end{array}
  \right),
$$
with eigenvalues $\pm\sqrt{k^2-k+1},\,0$.
   Besides these three eigenvalues, by interlacing, $\h_{k,k+1}$ has $\pm1$ eigenvalues of multiplicity at least $k-2$. Let $\pm\theta$ be the remaining eigenvalues. Thus,
$2(k^2-k+1)+2(k-2)+2\theta^2=2k^2$, which implies $\theta=1$. If $N$ is the bipartite adjacency matrix of either $\R_{2k+1}$
or $\q_{2k+1}$, then
$$NN^\p-I=\left(
    \begin{array}{cc}
      4J_{k-3} & 2J \\
      2J^\p& J_3
    \end{array}
  \right).$$
Thus $NN^\p-I$ is of rank one and so both $\spec(\R_{2k+1})$ and $\spec(\q_{2k+1})$ contain $\{0, (\pm1)^{k-1}\}$. For the two remaining eigenvalues $\pm\theta$
we have the equation $2(k-1)+2\theta^2=10(k-3)+12$ and so $\theta=2\sqrt{k-2}$.
}\end{proof}

Before treating the graphs of the subject of this section, we deal with the graphs of order $n$
whose spectrum contains $(\pm 1)^{\frac{n-2}{2}}$. If such a graph is regular, then it is easily follows that $G$ must be $K_{\frac{n}{2},{\frac{n}{2}}}$ minus a perfect matching. If it is regular, by \cite[Proposition~8]{ds}, $G$ is either the graph $G_1$ or $G_2$ of Figure~\ref{fig}. So we have the following:

\begin{thm} {\rm(van Dam--Spence \cite{ds})} \label{L_n} Let $G$ be a connected graph of order $n$.
 If the spectrum of $G$ contains $(\pm 1)^{\frac{n-2}{2}}$,
  then $G$ is either $\cl_{\frac{n}{2},{\frac{n}{2}}}$ or the graph $G_1$ or $G_2$ of Figure~\ref{fig}.
\end{thm}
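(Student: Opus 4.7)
The plan is to first pin down the spectrum. Since $\text{tr}(A)=0$ and $(\pm1)^{(n-2)/2}$ already sum to zero, the two remaining eigenvalues must sum to zero, so they are $\pm\la$ for some $\la\ge0$. A count of edges via $2|E(G)|=\sum\la_i^2$ rules out $\la=0$ for connected $G$ (since then $(n-2)/2<n-1$), and $\la=1$ only leaves $G=K_2=\cl_{1,1}$ (the only connected graph with Perron eigenvalue $1$). Otherwise $\la>1$ is the Perron eigenvalue, simple because $G$ is connected, and the spectrum $\{\pm\la,(\pm1)^{(n-2)/2}\}$ is symmetric about $0$, forcing $G$ bipartite. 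Writing its bipartite adjacency matrix as $N$ of size $a\times b$ with $a+b=n$ and $a\le b$, the nonzero singular values of $N$ are $\la$ (once) and $1$ (of multiplicity $(n-2)/2$), which sum to $n/2$ and so force $a=b=n/2=:k$ and $N$ square and nonsingular. Thus $G$ is the incidence graph of a combinatorial design on $k$ points and $k$ blocks whose incidence matrix has exactly two distinct nonzero singular values, $\la$ and $1$.

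Next I would exploit the rank-one structure of $NN^\p-I$. Being positive semidefinite of rank one, it can be written as $NN^\p=I+uu^\p$ for some nonnegative vector $u$ with $\|u\|^2=\la^2-1$; the diagonal then gives the block sizes $r_i=1+u_i^2$ and the off-diagonal gives pairwise block intersections $|B_i\cap B_j|=u_iu_j$. When $G$ is regular, all the $u_i$ coincide with a common value $c$, so the $k$ blocks are $r$-subsets of a $k$-point set (with $r=1+c^2$) which pairwise meet in $r-1$ points. A short sunflower/containment dichotomy shows that any such family of $k$ pairwise-$(r-1)$-intersecting $r$-subsets must either share a common $(r-1)$-set (impossible here, since that would force $r=1$ and hence $\la=1$) or lie in a common $(r+1)$-set, forcing $r=k-1$ and $N=J-I$ up to row/column permutation. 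This gives $G=\cl_{k,k}$.

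The nonregular case is where I would lean on the literature. In that case $G$ is a connected nonregular bipartite graph with exactly four distinct eigenvalues $\pm\la,\pm1$, so it is the incidence graph of a (nonregular) combinatorial design whose incidence matrix has two distinct nonzero singular values; van Dam and Spence \cite{ds} classified the admissible parameter sets for such designs. Restricting their list to the sub-case ``smaller singular value equals $1$'' leaves only a short list of feasibility tuples, which I would then check one by one to rule out all but the two exceptional graphs $G_1$ and $G_2$ of orders $10$ and $12$ displayed in Figure~\ref{fig}. I expect this last elimination to be the main obstacle: without quoting \cite{ds}, one would need to combine the rank-one constraints on \emph{both} $NN^\p-I$ and $N^\p N-I$ to show that the vectors $u$ and $w$ each take at most two distinct values, reducing the problem to a small Diophantine system from the BIBD equations whose only positive solutions correspond to $G_1$ and $G_2$.
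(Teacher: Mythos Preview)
Your plan is correct and structurally identical to the paper's proof: both split into the regular and nonregular cases and both invoke van Dam--Spence \cite{ds} for the latter. The paper in fact cites \cite[Proposition~8]{ds} directly, which already outputs the two exceptional bipartite adjacency matrices, so no further ``short list'' elimination is needed.

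The one genuine difference is in the regular case. You go through the rank-one identity $NN^\top=I+uu^\top$, deduce that the rows of $N$ are $k$ pairwise $(r-1)$-intersecting $r$-subsets of a $k$-set, and then run a sunflower/containment dichotomy to force $N=J-I$. The paper sidesteps all of this with a one-line eigenvalue count: if $G$ is $d$-regular then $\la=d$ is the Perron eigenvalue, and $2|E(G)|=n\la=2\la^2+(n-2)$ gives $2\la^2-n\la+(n-2)=0$, whose roots are $\la=1$ and $\la=(n-2)/2$; hence $\la=(n-2)/2$ and $G=K_{n/2,n/2}$ minus a perfect matching. Your argument is perfectly valid, but the paper's is shorter and avoids the combinatorial case analysis altogether.
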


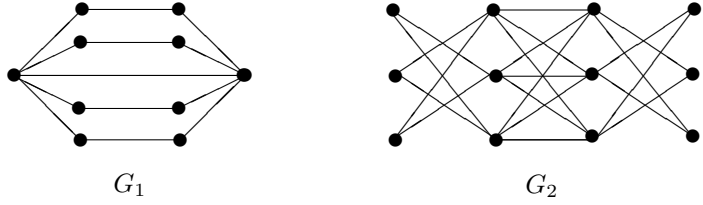
\begin{figure}\begin{center}
\setlength{\unitlength}{1700sp}
\begin{picture}(3360,1920)(3053,-3647)
\put(4049,-1739){\circle*{200}}
\put(5441,-1739){\circle*{200}}
\put(6401,-2699){\circle*{200}}
\put(5441,-2219){\circle*{200}}
\put(4025,-2219){\circle*{200}}
\put(3065,-2699){\circle*{200}}
\put(5441,-3179){\circle*{200}}
\put(4001,-3179){\circle*{200}}
\put(4025,-3635){\circle*{200}}
\put(5465,-3635){\circle*{200}}
\put(6377,-2699){\circle*{200}}
\put(3089,-2699){\line( 1, 1){960}}
\put(4049,-1739){\line( 1, 0){1392}}
\put(5441,-1739){\line( 1,-1){960}}
\put(6401,-2699){\line(-2, 1){960}}
\put(5441,-2219){\line(-1, 0){1416}}
\put(4025,-2219){\line(-2,-1){960}}
\put(3065,-2699){\line( 1, 0){3336}}
\put(6401,-2699){\line(-2,-1){960}}
\put(5441,-3179){\line(-1, 0){1440}}
\put(4001,-3179){\line(-2, 1){864}}
\put(3137,-2747){\line( 1,-1){888}}
\put(4025,-3635){\line( 1, 0){1440}}
\put(5465,-3635){\line( 1, 1){912}}
\put(6377,-2699){\line(-1, 0){3288}}
\put(4500,-4400){$G_1$}
\end{picture}
\hspace{3cm}
\begin{picture}(1487,1924)(4214,-3436)
\put(4239,-1536){\circle*{200}}
\put(5689,-1524){\circle*{200}}
\put(4276,-2499){\circle*{200}}
\put(5664,-2461){\circle*{200}}
\put(4276,-3424){\circle*{200}}
\put(5664,-3374){\circle*{200}}
\put(4239,-1536){\line( 1, 0){1450}}
\put(5689,-1524){\line(-3,-2){1428.231}}
\put(4276,-2499){\line( 1, 0){1388}}
\put(5664,-2461){\line(-3,-2){1405.385}}
\put(4276,-3424){\line( 1, 0){1388}}
\put(5664,-3386){\line(-4, 5){1463.610}}
\put(4226,-1536){\line( 3,-2){1413.461}}
\put(5664,-1536){\line(-3,-4){1405.920}}
\put(4276,-3424){\line( 1, 0){1388}}
\put(5664,-3374){\line(-3, 2){1370.769}}
\put(7139,-1524){\circle*{200}}
\put(7114,-2461){\circle*{200}}
\put(7114,-3374){\circle*{200}}
\put(7089,-1524){\line(-3,-2){1428.231}}
\put(7114,-2461){\line(-3,-2){1405.385}}
\put(7114,-3386){\line(-4, 5){1463.610}}
\put(5726,-1536){\line( 3,-2){1413.461}}
\put(7114,-1536){\line(-3,-4){1405.920}}
\put(7114,-3374){\line(-3, 2){1370.769}}
\put(2789,-1536){\circle*{200}}
\put(2826,-3424){\circle*{200}}
\put(2826,-2499){\circle*{200}}
\put(4239,-1524){\line(-3,-2){1428.231}}
\put(4214,-2461){\line(-3,-2){1405.385}}
\put(4214,-3386){\line(-4, 5){1463.610}}
\put(2776,-1536){\line( 3,-2){1413.461}}
\put(4214,-1536){\line(-3,-4){1405.920}}
\put(4214,-3374){\line(-3, 2){1370.769}}
\put(4700,-4200){$G_2$}
\end{picture}
\end{center}
\caption{\small The only nonregular graphs of order $n$ whose spectrum contain $(\pm1)^{\frac{n-2}{2}}$}
\label{fig}
\end{figure}

\begin{thm}\label{S_n} Let $G$ be a connected graph of order $n$. If the spectrum of $G$ contains $\{0,\,(\pm1)^\frac{n-3}{2}\}$, then $G$ is one of the graphs $\s_n$, $\R_n$, $\q_n$, $\h_{\frac{n-1}{2},\frac{n+1}{2}}$, $G_3$, or $G_4$ of Figure~\ref{g3g4}.
\end{thm}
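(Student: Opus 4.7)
The plan is to reduce the problem to the pseudo-design classification of Theorem~\ref{pseudo}(i) in the ``constant'' sub-case, and to a direct combinatorial enumeration in the remaining one. Since $(n-3)/2\in\mathbb{Z}$, $n=2k+1$ is odd. By the trace condition the two eigenvalues of $G$ not listed must sum to $0$; they cannot both vanish, because a connected graph has at least $n-1$ edges while the listed eigenvalues contribute only $n-3$ to $\sum\lambda_i^2$. Hence the spectrum is $\{\pm\lambda,\,0,\,(\pm1)^{k-1}\}$ with $\lambda>0$ and $\lambda\ne1$, so $G$ is bipartite with five distinct eigenvalues. Since the multiplicity of the eigenvalue $0$ equals $n-2\,\mathrm{rank}(N)$, where $N$ is the bipartite adjacency matrix, we get $\mathrm{rank}(N)=k$, the parts of $G$ have sizes $k$ and $k+1$, and $N$ is a $k\times(k+1)$ matrix of full row rank.

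The matrix $NN^\p$ has spectrum $\{\lambda^2,\,1^{k-1}\}$, so $NN^\p=I+ww^\p$ for some vector $w$ which, after a sign flip, has all entries $\ge 0$. Writing $B_i\subseteq[k+1]$ for the support of the $i$th row of $N$, this gives
$$|B_i|=1+w_i^2,\qquad |B_i\cap B_j|=w_iw_j\text{ for }i\ne j.$$
If any $w_i=0$ then $B_i$ is a singleton disjoint from every other block, producing a $K_2$-component of $G$; connectedness therefore forces $w_i>0$ throughout.

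Suppose first that $w$ is constant. Then $|B_i|$ and $|B_i\cap B_j|$ are constants $\bar k$ and $\bar\lambda$ with $\bar k-\bar\lambda=1$, so $G$ is the incidence graph of a pseudo $(k+1,\bar k,\bar\lambda)$-design with $\bar k=\bar\lambda+1$. Theorem~\ref{pseudo}(i) says such a design is obtained from either a $(k,1,0)$- or a $(k,k-1,k-2)$-design by adjoining an isolated point or a point that belongs to every block; the ``isolated point'' options give a disconnected graph, while the other two produce the bipartite adjacency matrices $[I_k\mid\mathbf{1}]$ and $[J_k-I_k\mid\mathbf{1}]$ of $\s_{2k+1}$ and $\h_{k,k+1}$ respectively.

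Suppose now that $w$ is non-constant. Since $w_i^2$ and $w_iw_j$ are positive integers, all $w_i^2$ share one squarefree part $c$; writing $w_i=n_i\sqrt c$ with $n_i\in\mathbb{Z}_{>0}$ and using $w_iw_j\le\min(|B_i|,|B_j|)$, one obtains $c=1$ and $n_i\in\{1,2\}$ for every $i$. Thus blocks split into ``small'' rows ($|B_i|=2$) and ``big'' rows ($|B_i|=5$), with small-small, small-big and big-big intersections equal to $1$, $2$ and $4$ respectively; in particular every small block is contained in every big block. The small blocks, being $2$-sets pairwise meeting in one point, form either a sunflower through a common point or (if no such point exists, hence at most three small blocks) a triangle on three points. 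A direct enumeration on the structure of the small and big blocks, together with the column-sum information coming from the spectrum $\{\lambda^2,\,1^{k-1},\,0\}$ of $N^\p N$, shows that the sunflower case yields the family $\R_{2k+1}$, the triangle case yields $\q_{2k+1}$, and the only further possibilities are two sporadic configurations at $n=13$, giving the graphs $G_3,G_4$ of Figure~\ref{g3g4}. The main obstacle is this final enumeration: one must check carefully that the forced intersection pattern, combined with the rank constraint on $N^\p N$, admits only these configurations beyond the two infinite families.
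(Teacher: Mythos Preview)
Your setup and the constant-$w$ case match the paper exactly: the identity $NN^\top=I+ww^\top$, the squarefree-part argument, and the appeal to Theorem~\ref{pseudo}(i) are all the same. The divergence is in the non-constant case, and there your proposal has both a factual slip and a real gap.

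First, the slip: $G_3$ and $G_4$ have order $11$, not $13$. Their spectra are $\{\pm3\sqrt2,\,0,\,(\pm1)^4\}$ and $\{\pm\sqrt{15},\,0,\,(\pm1)^4\}$, so $k=5$ and $n=2k+1=11$.

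Second, and more substantively, the sunflower/triangle dichotomy on the small blocks is a correct observation but does not by itself yield the classification you state. The sporadic graphs $G_3$ and $G_4$ arise when there are only one or two small blocks (so trivially a ``sunflower''), and in the genuine sunflower case with three small blocks you still have to pin down the structure of the big blocks. The paper handles all of this systematically by writing
\[
N=\begin{pmatrix}N_1 & J\\ O & N_2\end{pmatrix},
\]
where the rows of $N_1$ (size $k_1\times\ell_1$) correspond to the degree-$5$ vertices and those of $N_2$ (size $k_2\times\ell_2$) to the degree-$2$ ones, and then reading off from $NN^\top=I+{\bf x}^\top{\bf x}$ the separate equations
\[
N_1N_1^\top=I+(4-\ell_2)J,\qquad N_2N_2^\top=I+J.
\]
A rank count forces exactly one of $N_1,N_2$ to be square, and then Lemma~\ref{bibd} and Theorem~\ref{pseudo} are applied \emph{again}, this time to $N_1$ and $N_2$ individually, to produce $\R_n$, $\q_n$, $G_3$, $G_4$ and nothing else. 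This second pass through the design classifications is the step your ``direct enumeration'' stands in for; without it the argument is incomplete, as you yourself note. Your combinatorial picture can be made to work, but you would have to analyse the big blocks (size $5$, pairwise intersection $4$) just as carefully, and the cleanest way to do that is precisely the $N_1N_1^\top$ equation above.
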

\begin{proof}{From the spectrum of $G$ it is obvious that $G$ is bipartite of order $n=2k+1$. Let $N$ be the $r\times s$ bipartite adjacency matrix of $G$ where $r+s=2k+1$ and $r\le s$.
Considering the rank of the adjacency matrix of $G$, we have ${\rm rank}(N)=k$. This implies that $r=k$ and $s=k+1$.
So $NN^\p$ is nonsingular with two distinct eigenvalues $1,\theta$, say. Since the multiplicity of eigenvalue $1$ is $k-1$, $NN^\p-I$ is a rank one matrix, and by the Perron--Frobenius theorem, one may choose a positive eigenvector ${\bf x}=(x_1,\ldots,x_k)$ of $NN^\p$ for $\theta$ so that
\begin{equation}
  NN^\p = I+{\bf x}^\p{\bf x}.\label{nnt}
\end{equation}
If the vertices corresponding to the rows of $N$ are labeled $\{1,\ldots,k\}$, from (\ref{nnt}) it follows that
\begin{align}
  d_i = &~ 1+x_i^2,\label{di} \\
  d_{ij} = &~ x_ix_j,\label{dij}
\end{align}
where $d_i$ and $d_{ij}$, for $i,j=1,\ldots,k$, are the degree of the vertex $i$ and the number of common neighbors of the vertices $i,j$, respectively. It turns out that ${\bf x}=\sqrt{\delta}{\bf w}$, where ${\bf w}=(w_1,\ldots,w_k)$ is a positive integer vector and $\delta$ is a square-free integer.

 First let $d_i=d$ for $i=1,\ldots,k$. By (\ref{di}) and (\ref{dij}), $d_{ij}=d-1$, for every $i,j$.
This means that $N$ is the incidence matrix of a pseudo $(k,\,d,\,d-1)$-design. Therefore from Theorem~\ref{pseudo}
it follows that  $G$ is either $\s_{2k+1}$ or $\h_{k,k+1}$.
\begin{figure}
 \centering \includegraphics[width=10cm]{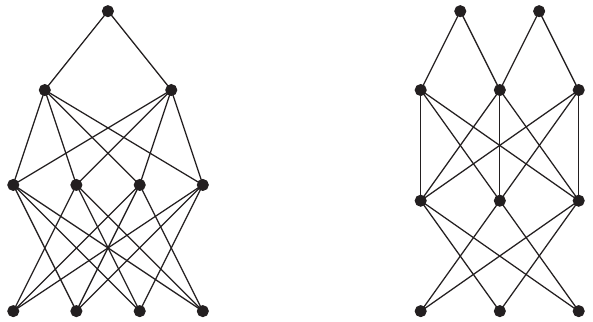}\\
 \caption{The graphs $G_3$ and $G_4$}\label{g3g4}
  \end{figure}

Now let $d_i>d_j$ for some $i,j$. Thus $w_i\ge w_j+1$, and
 $$   d_j\ge d_{ij}=\delta w_iw_j\ge\delta w_j^2+\delta w_j\ge\delta w_j^2+1=d_j.$$
So one must have the equality in all the above inequalities which
implies $\delta=w_j=1$, $w_i=2$,
 and so $d_{ij}=d_j=2$, $d_i=5$. Therefore, the vertices of $G$ corresponding to the rows of $N$ are of degree either 2 or 5 and any vertex of degree 2
 has all of its neighbors in common with any vertex of degree 5. It thus follows that $N$ can be rearranged so that
 \begin{equation}\label{n}
   N=\left(
    \begin{array}{cc}
      N_1 & J \\
      O & N_2
    \end{array}
  \right),
 \end{equation}
in which $N_1$ and $N_2$ correspond to the vertices of degree 5 and 2, respectively.
Suppose that $N_1$ and $N_2$ are $k_1\times\ell_1$ and $k_2\times\ell_2$, respectively. With the above rearrangement, ${\bf x}=(2,\ldots,2,1,\ldots,1)$ with $k_1$ $2$'s and $k_2$ $1$'s.
So
$${\bf x}^\p{\bf x}=\left(
    \begin{array}{cc}
      4J_{k_1,k_1} & 2J_{k_1,k_2} \\
      2J_{k_2,k_1}& J_{k_2,k_2} \\
    \end{array}
  \right).$$
  If $\ell_1=0$,  then
  $$NN^\p=\left(\begin{array}{c} J \\ N_2\end{array}\right)\left(\begin{array}{cc} J^\p & N_2^\p \end{array}\right)
  =\left(\begin{array}{cc}
    \ell_2J_{k_1,k_1} & 2J_{k_1,k_2} \\
    2J_{k_2,k_1}& J_{k_2,k_2}+I\\
 \end{array}
 \right).$$
  In view of (\ref{nnt}), we must have $k_1=1$ which in turn implies that $\ell_2=5$ and $k_2=3$.
  So the graph $G$ consists of three vertices of degree 2, say $v_1,v_2,v_3$ and one vertex of degree 5 in one part and five other vertices in the other part such that each of these five latter vertices is adjacent to at least one of $v_1,v_2,v_3$. On the other hand, since  $N_2N_2^\p=J+I$, each pair of $v_1,v_2,v_3$ have a common neighbor, which is not possible.
   Therefore, $\ell_1>0$. By inspecting $NN^\p$ and (\ref{nnt}), we have
\begin{eqnarray}
  N_1N_1^\p&=&I+(4-\ell_2)J,\label{n1} \\
  N_2N_2^\p&=&I+J\label{n2},
\end{eqnarray}
and moreover, one of $N_1$ or $N_2$ must be square since otherwise from (\ref{n}) it is clear that ${\rm rank}(N)\le k-2$ which is a contradiction.

First let $N_1$ be square. Thus $N_1$ is the incidence matrix of a symmetric $(k_1,3-\ell_2,4-\ell_2)$-design $\D_1$ and $N_2$ is the incidence matrix of a pseudo $(\ell_2,\,2,\,1)$-design. Therefore, by Theorem~\ref{bibd}, $\D_1$ is either the symmetric $(k_1,\,1,\,0)$-design or the  symmetric $(k_1,\,k_1-1,\,k_1-2)$-design. If $\D_1$ is the symmetric $(k_1,\,1,\,0)$-design, then $\ell_2=4$, and so by Theorem~\ref{pseudo}, $\D_2$ is obtained from the symmetric $(3,\,1,\,0)$-design by adding a new point to all of its blocks. So we find that $N_1=I_{k_1}$ and
$N_2=\widetilde{I_3}$.
 Therefore, $G$ is $\R_n$.
If $\D_2$ is the symmetric $(k_1,\,k_1-1,\,k_1-2)$-design, then we must have $4-\ell_2=k_1-2$ and so $k_2=5-k_1$.
As $I+(4-\ell_2)J$ is a positive semidefinite matrix, $\ell_2\le4$. As $k_2\ge1$, we have also $\ell_2\ge2$.  If $\ell_2=4$, then $G$ is $\R_{11}$. If $\ell_2=2,3$, then $(k_1,k_2)=(4,1)$ or $(3,2)$ from which it follows that
$G$ is either $G_3$ or $G_4$, respectively.

Now, let $N_2$ be square. From Theorem~\ref{bibd} it follows that $N_2$ is the incidence matrix of the symmetric $(k_2,\,k_2-1,\,k_2-2)$-design. By (\ref{n2}), $k_2-2=1$. Thus $N_2=J_3-I_3$, $\ell_2=3$, and $N_1N_1^\p=I+J$. So $N_1$ is the incidence matrix of a pseudo design which by Theorem~\ref{pseudo} obtained in one of the following three ways: 1) From a symmetric $(k_1,\,1,\,0)$-design by adding an extra point to all the blocks, i.e. $N_1=\widetilde{I_{k_1}}$
which means that $G$ is the graph $\q_n$. 2) From a symmetric $(k_1,\,k_1-1,\,k_1-2)$-design  by adding an extra point to all the blocks, so $k_1-2=0$  which implies $G$ to be $\q_5$. 3)  From a symmetric $(k_1,\,k_1-1,\,k_1-2)$-design  by adding an isolated point which is impossible as this makes $G$ disconnected.
}\end{proof}

In the rest of this section we determine the spectral characterization of the graphs discussed so far. We begin by $\cl_{k,k}$ which is readily seen that it is DS as it is
the only $(k-1)$-regular bipartite  graph of order $2k$.

For later use we need to mention the spectrum of the graphs $G_1,G_2,G_3$ and $G_4$:
$$\begin{array}{ll}
   \spec(G_1)=\left\{\pm3,\,  (\pm1)^4\right\},&  \spec(G_2)=\left\{\pm4,\,  (\pm1)^5\right\},\\
   \spec(G_3)=\left\{\pm3\sqrt2,\, 0,\,  (\pm1)^4\right\},  &\spec(G_4)=\left\{\pm\sqrt{15},\, 0,\, (\pm1)^4\right\}.
\end{array}$$

\begin{cor} The graph $\h_{k,k+1}$ is {\rm DS} for $k\ge2$.
\end{cor}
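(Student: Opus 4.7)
The plan is to show that any graph $G$ cospectral with $\h_{k,k+1}$ must be isomorphic to it. Since $\spec(\h_{k,k+1})=\{\pm\sqrt{k^2-k+1},0,(\pm1)^{k-1}\}$ is symmetric about $0$, $G$ is bipartite of order $2k+1$. The spectral radius $\sqrt{k^2-k+1}$ exceeds $1$ for $k\ge2$ and has multiplicity $1$, so by Perron--Frobenius exactly one connected component $H$ of $G$ attains it; every other component is bipartite with spectral radius at most $1$, hence is $K_1$ or $K_2$. Thus $G=H\cup sK_1\cup tK_2$, and if $H$ has spectrum $\{\pm\sqrt{k^2-k+1},0^a,(\pm1)^b\}$, matching multiplicities gives $a+s=1$ and $b+t=k-1$, with $|H|=2+a+2b$.

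I would then split according to $(s,a)$. In the case $s=1$, $a=0$, the graph $H$ has order $2b+2$ and spectrum $\{\pm\sqrt{k^2-k+1},(\pm1)^b\}$, so Theorem~\ref{L_n} forces $H\in\{\cl_{b+1,b+1},G_1,G_2\}$, yielding $k^2-k+1\in\{b^2,9,16\}$. The first is impossible because $(k-1)^2<k^2-k+1<k^2$ for $k\ge2$, and the other two have no integer solution. In the case $s=0$, $a=1$, $b<k-1$ (so $t\ge1$), the graph $H$ is connected of order $2b+3<2k+1$ with spectrum matching the hypothesis of Theorem~\ref{S_n}, so $H\in\{\s_{2b+3},\R_{2b+3},\q_{2b+3},\h_{b+1,b+2},G_3,G_4\}$. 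Each candidate produces an equation in $k$ and $b$; the $\h_{b+1,b+2}$ case reduces to $b(b+1)=k(k-1)$, forcing $b=k-1$ against the assumption, and the remaining cases are ruled out by algebraic comparison (for instance, $k^2-5k+17\le0$ has no real solution, so the $\R$ and $\q$ cases fail).

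The only remaining possibility is $s=t=0$, $b=k-1$, meaning $G=H$ is itself connected of order $2k+1$. Applying Theorem~\ref{S_n} directly to $G$ yields six candidates with spectral radii $\sqrt{k+1}$, $2\sqrt{k-2}$, $2\sqrt{k-2}$, $\sqrt{k^2-k+1}$, $3\sqrt{2}$, $\sqrt{15}$ respectively. For $k\ge3$ one checks $k^2-k+1>k+1$, $k^2-k+1>4(k-2)$, and $k^2-k+1\notin\{18,15\}$, so only $\h_{k,k+1}$ has the correct spectral radius. For $k=2$ the graphs $\h_{2,3}$ and $\s_5$ are literally the same graph (both isomorphic to $P_5$), so no ambiguity arises.

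The main obstacle is bookkeeping rather than depth: the disconnection analysis has to enumerate all possible small components $H$ and rule each out by an arithmetic comparison, and the key numerical fact making all cases work is that $k^2-k+1$ grows faster than the spectral-radius expressions of every competing graph in Theorems~\ref{L_n} and \ref{S_n}. Once this is done carefully, the result follows immediately from the two classification theorems.
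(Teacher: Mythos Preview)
Your argument is correct and follows essentially the same route as the paper: decompose a putative cospectral mate into one large component plus copies of $K_1$ and $K_2$, invoke Theorems~\ref{L_n} and~\ref{S_n} to list the possible large components, and eliminate each candidate by comparing spectral radii. The paper compresses the elimination step by observing once and for all that $k^2-k+1$ is always odd (ruling out $\cl_{t,t}$ with even $(t-1)^2$, $\R_{2t+1}$, $\q_{2t+1}$, $G_2$, $G_3$) and never a perfect square for $k\ge2$ (ruling out $\cl_{t,t}$ and $G_1$), then uses an edge count for $\s_{2t+1}$, whereas you carry out the comparisons case by case via explicit inequalities; both are valid.
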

\begin{proof}{Any cospectral mate $H$ of $\h_{k,k+1}$ for $k\ge2$ must have one of the graphs of Theorems~\ref{L_n} and \ref{S_n} as a connected component. Nothing that $k^2-k+1$ is always odd and never (unless $k=1$) a perfect square, $H$ cannot have one of
$\cl_{t,t}$, $\R_{2t+1}$, $\q_{2t+1}$, for any $t$, or $G_1, G_2, G_3$ as a component. Considering the number of edges, $\s_{2t+1}$, for any $t$, cannot be a component of $H$. The same is for $G_4$ as the equation $k^2-k+1=15$ has no integral solution.
}\end{proof}

 The graphs $\s_n$ belong to a family of trees called {\em starlike trees} (trees with only one vertex of degree larger than 2).
 In \cite{dh}, it was asked to determine which starlike trees are DS.
Partial results are obtained by several authors (cf. \cite{dh}). For this specific starlike trees,  Brouwer \cite{b} showed that the graphs $\s_n$ are DS among trees.
Here, we completely determine the spectral characterization of the graphs $\s_n$. The proof is the same as proof of the above corollary.

\begin{cor}\label{SDS} The graph $\s_{2k+1}$ is {\rm DS} if $k\not\in S$, where
$$S=\{4\ell+3\mid\ell\in\mathbb{N}\}\cup\{\ell^2-1\mid\ell\in\mathbb{N}\}\cup\{\ell^2-\ell\mid\ell\in\mathbb{N}\}\cup\{14,17\}.$$
Moreover, for $k\in S$ we have
\begin{itemize}
  \item $\s_{17}$ has exactly two cospectral mates which are $\cl_{4,4}\cup4K_2\cup K_1$ and $G_1\cup3K_2\cup K_1$; 
  \item $\s_{29}$ has exactly one cospectral mate which is $G_4\cup9K_2$;
  \item $\s_{31}$ has exactly four cospectral mates which are $G_2\cup9K_2\cup K_1$, $\cl_{5,5}\cup10K_2\cup K_1$, $\R_{13}\cup9K_2$, and $\q_{13}\cup9K_2$; 
  \item $\s_{35}$ has exactly one cospectral mates which is $G_3\cup12K_2$;
  \item if $k=4\ell+3$ and $\ell$ is not an integer of the form $t^2-1$, then $\s_{2k+1}$ has exactly two cospectral mates which are  $\R_{2\ell+7}\cup3\ell K_2$ and $\q_{2\ell+7}\cup3\ell K_2$;
   \item if $k=\ell^2-1$, $\ell=2t$, and $k\ne15$, then $\s_{2k+1}$ has exactly three cospectral mates which are $\cl_{\ell+1,\ell+1}\cup(k-\ell-1)K_2\cup K_1$, $\R_{2t^2+5}\cup3(t^2-1)K_2$, and $\q_{2t^2+5}\cup3(t^2-1)K_2$;
  \item if $k=\ell^2-1$, $\ell$ is odd, and $k\ne8$, then $\s_{2k+1}$ has exactly one cospectral mate which is $\cl_{\ell+1,\ell+1}\cup(k-\ell-1)K_2\cup K_1$;
  \item if $k=\ell^2-\ell$, then $\s_{2k+1}$ has exactly one cospectral mate which is $\h_{\ell,\ell+1}\cup(k-\ell)K_2$.
\end{itemize}
\end{cor}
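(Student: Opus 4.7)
The strategy is to mirror the proof of the previous corollary. Let $H$ be any graph cospectral with $\s_{2k+1}$. Then $H$ is bipartite, has $n=2k+1$ vertices, and has $2k$ edges. The first step is to show that $H=H_0\cup mK_2\cup\epsilon K_1$ with $\epsilon\in\{0,1\}$, where $H_0$ is the unique connected component carrying the simple eigenvalue $\sqrt{k+1}$. Indeed, any connected bipartite component on at least three vertices contains an induced $K_{1,2}$ (bipartiteness forbids triangles, and some vertex must have degree $\ge 2$), hence by Cauchy interlacing has spectral radius at least $\sqrt 2>1$; since $\sqrt{k+1}$ is the only eigenvalue of $\s_{2k+1}$ exceeding $1$ and is simple, at most one such component occurs, while the multiplicity-one eigenvalue $0$ forces $\epsilon\le 1$.

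Next, I would invoke Theorems~\ref{L_n} and~\ref{S_n} to enumerate the candidates for $H_0$. If $H_0$ has even order it is one of $\cl_{t,t}$, $G_1$, $G_2$; if odd order, one of $\s_{n_0}$, $\R_{n_0}$, $\q_{n_0}$, $\h_{\ell,\ell+1}$, $G_3$, $G_4$. Equating $\sqrt{k+1}$ with the largest eigenvalue taken from Lemma~\ref{spec} (or with $3,4,3\sqrt 2,\sqrt{15}$ for $G_1,G_2,G_3,G_4$) pins down $k$ as an arithmetic function of the parameter of $H_0$; counting vertices and $\pm 1$-multiplicities then fixes $m$ and $\epsilon$ uniquely. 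The resulting constraints are: $H_0=\cl_{\ell+1,\ell+1}$ forces $k=\ell^2-1$; $H_0=\R_{n_0}$ or $\q_{n_0}$ forces $k=4\ell+3$; $H_0=\h_{\ell,\ell+1}$ forces $k=\ell^2-\ell$; and each of $G_1,G_2,G_3,G_4$ fixes $k=8,15,17,14$ respectively.

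Two degeneracies must then be excised: the case $H_0=\s_{n_0}$ forces $n_0=2k+1$ and so $H=\s_{2k+1}$ itself, while unravelling the definition shows $\R_7\cong\s_7$, rendering the $\ell=0$ endpoint of the $4\ell+3$ family spurious. Collecting the surviving cases yields precisely the set $S$, and the bulleted list of mates follows by combining the contributions of each case. The principal bookkeeping difficulty I anticipate is the possibility of several cases coinciding at a single $k$: the most delicate instance is $k=\ell^2-1$ with $\ell$ even, say $\ell=2t$, where writing $k=4(t^2-1)+3$ shows the value also falls in the $\R,\q$ family, which accounts for the ``$\ell$ not of the form $t^2-1$'' exclusion in the general $4\ell+3$ bullet and for the isolated $k=15$ bullet capturing the resulting four-way collision.
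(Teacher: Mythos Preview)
Your proposal is correct and follows essentially the same approach as the paper, which simply states that the proof ``is the same as proof of the above corollary.'' You have in fact made explicit the decomposition $H=H_0\cup mK_2\cup\epsilon K_1$ and the interlacing argument showing the small components are $K_1$ and $K_2$, steps the paper leaves implicit; and your case-by-case eigenvalue matching against the lists of Theorems~\ref{L_n} and~\ref{S_n} is exactly what the paper intends.
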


\begin{cor}  The graph $\R_7$,
has exactly three cospectral mates, namely $\q_7$,  $S_7$, and $\cl_{3,3}\cup K_1$.
If $k=\ell^2+2$, for some $\ell\ge2$, then the graph $\R_{2k+1}$ has exactly two cospectral mates, namely $\q_{2k+1}$ and $\cl_{2\ell+1,2\ell+1}\cup(\ell-1)^2K_2\cup K_1$.
 For other values of $k\ge3$, the only cospectral mate of  $\R_{2k+1}$ is  $\q_{2k+1}$.
\end{cor}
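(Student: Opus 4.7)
The plan is to let $H$ be any graph cospectral with $\R_{2k+1}$ and pin down its connected components. By Lemma~\ref{spec}(iv), $\spec(H)=\{\pm 2\sqrt{k-2},\,0,\,(\pm 1)^{k-1}\}$ is symmetric about $0$, so $H$ is bipartite on $2k+1$ vertices. The spectrum of $H$ is the multiset union of the spectra of its components, each of which is itself bipartite with symmetric spectrum; hence exactly one component $M$ of $H$ contributes both of the eigenvalues $\pm 2\sqrt{k-2}$. Every other component is connected bipartite with all eigenvalues in $\{0,\pm 1\}$, so its spectral radius is at most $1$, forcing it to be $K_1$ or $K_2$. The single $0$ eigenvalue of $H$ sits either in $M$ or in a unique $K_1$-component, so either $0\in\spec(M)$ and the remaining components are $K_2$'s only, or $0\notin\spec(M)$ and exactly one $K_1$ appears.

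I would treat the case $0\in\spec(M)$ first. Then Theorem~\ref{S_n} places $M$ among $\s_{2t+1},\,\h_{t,t+1},\,\R_{2t+1},\,\q_{2t+1},\,G_3,\,G_4$, and its top eigenvalue must equal $2\sqrt{k-2}$. Using Lemma~\ref{spec}: the case $\h_{t,t+1}$ gives $t^2-t+1=4k-8$, which fails integrality since $t(t-1)$ is even while $4k-9$ is odd; and $G_3,G_4$ force $k-2\in\{9/2,\,15/4\}$, also excluded. The case $M=\R_{2k+1}$ just recovers $H=\R_{2k+1}$ itself (not a mate); $M=\q_{2k+1}$ yields the mate $\q_{2k+1}$ for every $k\ge 3$; and $M=\s_{2t+1}$ forces $t=4k-9$, so $M$ alone has $8k-17$ vertices, which fits into $2k+1$ only when $k\le 3$. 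Only $k=3$ works, giving $H=\s_7$.

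Next I would treat $0\notin\spec(M)$, so $\spec(M)=\{\pm 2\sqrt{k-2},\,(\pm 1)^s\}$ and Theorem~\ref{L_n} places $M\in\{\cl_{t,t},G_1,G_2\}$. Equating top eigenvalues, $G_1$ needs $k=17/4$ (excluded), while $\cl_{t,t}$ needs $(t-1)^2=4(k-2)$, forcing $t-1$ even, $t=2\ell+1$ and $k=\ell^2+2$. The residual then has $(2k+1)-(4\ell+2)=2(\ell-1)^2+1$ vertices, and the $0$-multiplicity pins it uniquely as $(\ell-1)^2K_2\cup K_1$. For $\ell=1$ this gives $\cl_{3,3}\cup K_1$ as a mate of $\R_7$; for $\ell\ge 2$ it gives the claimed mate $\cl_{2\ell+1,2\ell+1}\cup(\ell-1)^2K_2\cup K_1$ of $\R_{2k+1}$. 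The case $M=G_2$ only yields the sporadic value $k=6$, which I would handle separately to confirm consistency with the statement.

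Collecting the two cases produces the listed mates: three for $\R_7$; the pair $\q_{2k+1}$ and $\cl_{2\ell+1,2\ell+1}\cup(\ell-1)^2K_2\cup K_1$ when $k=\ell^2+2$ with $\ell\ge 2$; and only $\q_{2k+1}$ otherwise. I expect the main obstacle to be the bookkeeping in the case analysis, that is, the patient matching of the top eigenvalue $2\sqrt{k-2}$ to the graphs of Theorems~\ref{L_n} and \ref{S_n} under integrality of $k$, together with the inequality $|V(M)|\le 2k+1$ needed to suppress the $\s$-family for $k\ge 4$, and the uniqueness of the $K_1/K_2$-decomposition of the residual vertices.
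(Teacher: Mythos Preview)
Your approach is exactly the one the paper intends (the corollary is stated without proof, in the same spirit as Corollary~\ref{SDS}): decompose a putative cospectral mate $H$ into one ``large'' connected component $M$ classified by Theorem~\ref{L_n} or Theorem~\ref{S_n}, together with copies of $K_1$ and $K_2$, and then match the top eigenvalue $2\sqrt{k-2}$ against the spectra in Lemma~\ref{spec} and the listed $\spec(G_i)$. The parity argument ruling out $\h_{t,t+1}$, the size bound $|V(M)|\le 2k+1$ suppressing $\s_{2t+1}$ for $k\ge4$, and the square condition $k-2=\ell^2$ for the $\cl$-family are all correct and are precisely the computations one would do.

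The one substantive gap is the case $M=G_2$, which you defer with ``handle separately to confirm consistency.'' If you actually carry it out, it does \emph{not} confirm consistency. From $\spec(G_2)=\{\pm4,(\pm1)^5\}$ one gets $2\sqrt{k-2}=4$, so $k=6$, i.e.\ $\ell=2$; then $|V(G_2)|=12\le 13=2k+1$ and $H=G_2\cup K_1$ has spectrum $\{\pm4,0,(\pm1)^5\}=\spec(\R_{13})$. Since $G_2$ is connected on $12$ vertices, $G_2\cup K_1$ is isomorphic neither to $\q_{13}$ (connected on $13$ vertices) nor to $\cl_{5,5}\cup K_2\cup K_1$ (whose $12$-vertex part is disconnected). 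Hence for $\ell=2$ the graph $\R_{13}$ has \emph{three} cospectral mates, not two; the printed statement misses $G_2\cup K_1$. This is corroborated by the paper's own Corollary~\ref{SDS}, where $G_2\cup9K_2\cup K_1$ and $\R_{13}\cup9K_2$ both appear among the cospectral mates of $\s_{31}$, forcing $\spec(G_2\cup K_1)=\spec(\R_{13})$. So your method is right; the deferred case simply exposes an omission in the corollary at $\ell=2$ rather than any flaw in your argument.
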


In addition to that the graphs $\R_n$ and $\q_n$ are cospectral, they are related through switching.
We first recall the Seidel switching.
Let $G$ be a graph with  vertex set $V$, and  $X \subseteq V$.
From $G$ we obtain a new graph by leaving adjacency and non-adjacency inside $X$ and
$V \setminus X$ as it was, and interchanging adjacency and non-adjacency between $X$ and
$V\setminus X$ . This new graph is said to be obtained by Seidel switching with respect to the set $X$.
Now, in the graph $\R_n$, let $X$ be the set of four vertices corresponding to the columns of the submatrix $J$ in the bipartite adjacency matrix of $\R_n$. If we apply the Seidel switching on $\R_n$ with respect to $X$ we obtain $\q_n$.

\section{Graphs with many $\pm\2$ eigenvalues}

In this section we characterize all graphs of order $n$
whose spectrum contains a zero and $\pm\sqrt{2}$ with multiplicity $(n-3)/2$. It turns out that, up to isomorphism, there are exactly six such graphs, all of which are obtained in some way from the
 Fano plane.

We start with graphs of order $n$
  whose spectrum contain $(\pm\2)^\frac{n-2}{2}$. Let $N$ be the  $\frac{n}{2}\times \frac{n}{2}$ bipartite adjacency matrix of $G$.
 If $G$ is regular of degree $k$, say, then $NN^\p=(k-2)I+2J$ which means that $N$ is the incidence matrix of a $(n/2,\,k,\,k-2)$-design. Hence, by Lemma~\ref{bibd}, $N$ is  the incidence matrix of either the Fano plane or the complement of the Fano plane. The nonregular ones are characterized in \cite[Proposition~9]{ds}.

\begin{thm} {\rm(van Dam--Spence \cite{ds})} Let $G$ be a connected graph of order $n$.
  If the spectrum of $G$ contains $(\pm\2)^\frac{n-2}{2}$,
  then the bipartite adjacency matrix of $G$ is one of the following:
  \begin{itemize}
    \item[\rm(i)] incidence matrix of  the Fano plane (i.e. $G$ is the Heawood graph);
   \item[\rm(ii)] incidence matrix of the complement of the Fano plane;
  \item[\rm(iii)] \begin{equation}\label{N1N2}
   \left(
    \begin{array}{cc}
     N_1 & J_7 \\
    O_7  & N_2  \\
    \end{array}
  \right)~~\hbox{or}~~~
  \left(
    \begin{array}{ccc}
     1 & {\bf1}^\p & {\bf1}^\p \\
     {\bf1}  & I_5 & I_5 \\
      {\bf1} & I_5 & J_5-I_5
    \end{array}
  \right),
  \end{equation}
    where $N_1$ and $N_2$ are the incidence matrices of the Fano plane and the symmetric $(7,\,4,\,2)$-design, respectively.
  \end{itemize}
\end{thm}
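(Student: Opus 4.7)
The plan is to exploit that $G$ is bipartite (its spectrum is symmetric around $0$) with both parts of size $m:=n/2$ (each sign contributes $m$ eigenvalues counted with multiplicity), so the bipartite adjacency matrix $N$ is square and $NN^\top$ has eigenvalues $\lambda^2,2,\ldots,2$. Hence $NN^\top-2I$ is positive semidefinite of rank one, and by connectedness of $G$ together with Perron--Frobenius I may write
\[ NN^\top = 2I+\mathbf{x}\mathbf{x}^\top \]
for a positive vector $\mathbf{x}=(x_1,\dots,x_m)$. Reading off entries, $d_i=2+x_i^2$ and $|N(i)\cap N(j)|=x_ix_j$ for $i\neq j$; exactly as in the proof of Theorem~\ref{S_n}, this forces $x_i=\sqrt{\delta}\,w_i$ for a common square-free $\delta$ and positive integers $w_i$. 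The same analysis applies on the column side with $N^\top N=2I+\mathbf{y}\mathbf{y}^\top$.

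If $\mathbf{x}$ is constant, then $NN^\top=2I+cJ$, so $N$ is the incidence matrix of a symmetric $(m,c+2,c)$-design; since $r-\lambda=2$, the $b=v$ portion of Lemma~\ref{bibd}(ii) leaves only the $(7,3,1)$- and $(7,4,2)$-designs, yielding alternatives (i) and (ii). Otherwise, for any $i,j$ with $x_i>x_j$ the inequality $|N(i)\cap N(j)|\le d_j$ rewrites as $\delta w_j(w_i-w_j)\le 2$, and applied at $w_{\min}$ and $w_{\max}$ it leaves only the profiles $(\delta,\{w\})\in\{(2,\{1,2\}),\,(1,\{1,2\}),\,(1,\{1,3\}),\,(1,\{2,3\}),\,(1,\{1,2,3\})\}$. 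Moreover equality in that inequality is equivalent to $N(j)\subseteq N(i)$, a rigidity I will exploit in the block analysis that follows.

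Partitioning the rows of $N$ according to the value of $x_i$, the off-diagonal blocks of $NN^\top$ equal $x_sx_tJ$ while the diagonal blocks satisfy $N_sN_s^\top=2I+x_s^2J$. For the profile $\delta=2$, $\{w\}=\{1,2\}$ (degrees $4$ and $10$), the tight case $x_1x_2=4=d_j$ gives $N(v)\subseteq N(u)$ for every $v\in R_1$ and $u\in R_2$. Setting $U=\bigcap_{u\in R_2}N(u)$, this pins down
\[ N=\begin{pmatrix} J & N_2' \\ N_1' & O \end{pmatrix} \]
with $N_1'(N_1')^\top=2I+2J$ and $N_2'(N_2')^\top=2I+(8-|U|)J$. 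Running the same equality analysis on the column side and comparing row/column counts will force $|U|=7$ and $|R_1|=|R_2|=6$, so $m=14$. Each diagonal sub-block is then a pseudo $(7,k,\lambda)$-design with $k-\lambda=2$, and Theorem~\ref{pseudo}(ii) together with the uniqueness of the Fano plane and of the $(7,4,2)$-design identifies them, yielding the first matrix of (iii). The profile $(1,\{1,3\})$ is the column-side incarnation of the same case and produces no new graph, while the analogous analysis in the full profile $(1,\{1,2,3\})$ rigidly forces $|R_3|=1$ (one all-ones row, because every smaller neighborhood lies inside the unique $w=3$ neighborhood and its degree equals $m$) and makes the remaining $10\times 10$ block a bordered configuration with $5\times 5$ sub-matrices $I_5$ and $J_5-I_5$, giving the second matrix of (iii). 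The two intermediate profiles $(1,\{1,2\})$ and $(1,\{2,3\})$ have to be ruled out by elementary counting, using $\sum x_i^2=\sum y_j^2$ and the incompatibility of the induced pseudo-design parameters against Lemma~\ref{bibd}.

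The main obstacle is exactly this last sub-case analysis: ruling out intermediate values of $|U|$ in the $\delta=2$ branch, eliminating the partial $\{1,2\}$ and $\{2,3\}$ profiles, and showing that in the full $(1,\{1,2,3\})$ profile the $w=3$ ``border'' vertex is unique --- each step requires combining the row-side and column-side constraints in tandem with Theorem~\ref{pseudo}(ii), before the classification finally collapses onto just the two matrices listed in (iii).
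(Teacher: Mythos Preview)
Your route diverges sharply from the paper's. The paper handles the non-regular case in a single sentence by invoking \cite[Proposition~8]{ds} (van Dam--Spence), which already classifies all square bipartite adjacency matrices with exactly two nonzero singular values in the non-regular situation; the two matrices of (\ref{N1N2}) are read off directly from that result. Your plan instead rebuilds this classification from scratch via the rank-one identity $NN^\top=2I+\mathbf{x}\mathbf{x}^\top$, the integrality/profile analysis of $\mathbf{x}$, and then the paper's own Lemma~\ref{bibd} and Theorem~\ref{pseudo}. That is a legitimate and more self-contained strategy---it removes the dependence on an external reference---but it is considerably longer, and as you acknowledge, the decisive case analysis (pinning down $|U|$ in the $\delta=2$ branch, eliminating the pure profiles $(1,\{1,2\})$ and $(1,\{2,3\})$, and resolving the full $(1,\{1,2,3\})$ profile) is only sketched, not carried out.

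Two concrete remarks. First, a numerical slip: in the $\delta=2$ branch the target matrix in (\ref{N1N2}) has $7\times7$ blocks, so the outcome should be $|U|=7$, $|R_1|=|R_2|=7$ and $m=14$, not $|R_1|=|R_2|=6$. Second, your elimination of $(1,\{2,3\})$ in fact goes through cleanly once you observe (as in your $(1,\{1,2,3\})$ argument) that a $w=3$ row must be all-ones, forcing $m=11$ and $|R_3|=1$; the remaining ten degree-$6$ rows would then constitute a pseudo $(11,6,4)$-design, which Theorem~\ref{pseudo}(ii) forbids. The profile $(1,\{1,2\})$ is genuinely the delicate one, since the codegree $2$ is strictly below the smaller degree $3$ and no neighbourhood containment is available; here you really do need to run the row- and column-side constraints in tandem, and this is precisely the work that citing van Dam--Spence lets the paper avoid.
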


\begin{thm} Let $G$ be a connected graph of order $n$. If the spectrum of $G$ contains $\{0,(\pm\2)^\frac{n-3}{2}\}$, then $G$ is incidence graph of one of
\begin{itemize}
    \item[\rm(i)]  the pseudo $(7,\,3,\,1)$-design;
   \item[\rm(ii)] the pseudo $(7,\,4,\,2)$-design; or
  \item[\rm(iii)] $\D_1,\ldots,\D_4$  of Theorem~\ref{pseudo}.
 \end{itemize}
\end{thm}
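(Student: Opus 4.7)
The plan is to mimic the proof of Theorem~\ref{S_n}, with Theorem~\ref{pseudo}(ii) replacing Theorem~\ref{pseudo}(i) at the key step. Write $n = 2k+1$ and let $N$ denote the $k \times (k+1)$ bipartite adjacency matrix of $G$. The spectral hypothesis says $NN^\p$ has exactly two distinct eigenvalues, $2$ with multiplicity $k-1$ and some $\mu^2$ with multiplicity $1$; thus $NN^\p - 2I$ is of rank one and one may write
\begin{equation*}
NN^\p = 2I + {\bf x}{\bf x}^\p
\end{equation*}
for a positive eigenvector ${\bf x}$ of $NN^\p$ belonging to $\mu^2$. Reading diagonal and off-diagonal entries gives $d_i = 2 + x_i^2$ and $d_{ij} = x_i x_j$; since both are integers, ${\bf x} = \sqrt{\delta}\,{\bf w}$ for a squarefree positive integer $\delta$ and a positive integer vector ${\bf w}$.

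The central dichotomy is whether ${\bf w}$ is constant. If $w_i = w$ for every $i$, then every row of $N$ has common sum $k' := 2 + \delta w^2$ and every pair of rows meets in $\la := k' - 2$ columns, so $N$ is the incidence matrix of a pseudo $(k+1,\, k',\, \la)$-design with $k' - \la = 2$. Theorem~\ref{pseudo}(ii) then identifies this pseudo design as one of $\D_1, \ldots, \D_4$, or as obtained by deleting one block from the unique $(7,\,3,\,1)$-design or the unique $(7,\,4,\,2)$-design -- precisely the list (i)--(iii) of the statement.

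When ${\bf w}$ is not constant, pick $j$ with $w_j = m := \min_i w_i$ and any $i$ with $w_i > m$. The inequality $d_j \geq d_{ij}$ forces $\delta m(w_i - m) \leq 2$, so $(\delta, m) \in \{(1,1),\, (1,2),\, (2,1)\}$, with $w_i - m = 1$ except possibly when $(\delta, m) = (1, 1)$, where $w_i - m \in \{1, 2\}$. In each sub-case the row weights of $N$ are confined to two values and the three pairwise row inner products are completely determined. I would then split $N$ into row-blocks indexed by the level sets of ${\bf w}$ and, from the block identities for $N_\alpha N_\beta^\p$, derive contradictions by counting column incidences: indicatively, in the sub-case $(\delta, m, w_i) = (1, 1, 2)$ with row degrees $3$ and $6$, the requirement that every weight-$6$ row meet every weight-$3$ row in exactly $2$ columns sets up a linear system in the support indicator of the weight-$6$ row that is readily seen to be inconsistent; in the other sub-cases, where the heavier row has degree $10$ or $11$, two heavy rows with the prescribed high pairwise intersection already saturate more than the $k+1$ available columns. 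The principal obstacle is carrying out this elimination uniformly across all sub-cases, since each requires its own tailored column-count argument.
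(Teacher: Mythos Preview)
Your setup and the constant-$\mathbf{w}$ case are correct and match the paper. The gap is the non-constant case, which you leave at the level of a sketch, and the sketch itself is not sound. For instance, ``two heavy rows with the prescribed high pairwise intersection already saturate more than the $k+1$ available columns'' is simply false for large $k$: in the sub-case $(\delta,m)=(2,1)$ the heavy rows have degree $10$ and pairwise intersection $8$, so two of them together cover only $12$ columns, placing no upper bound on $k$; the same objection applies to $(\delta,m)=(1,2)$. The $(1,1,2)$ sub-case sketch (``a linear system \dots\ readily seen to be inconsistent'') is likewise not an argument.

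The paper's route avoids ad hoc column counts. Having reduced (via the equality $d_j=d_{ij}$) to exactly two $w$-values and hence two degree classes, one writes $N=\left(\begin{smallmatrix}N_1&J\\O&N_2\end{smallmatrix}\right)$ with the light rows in the bottom block; then $N_2N_2^\top=2I+\ell J$ for an explicit constant $\ell$, so $N_2$ is itself the incidence matrix of a symmetric $(k_2,\ell+2,\ell)$-design or a pseudo design, again with block size minus $\lambda$ equal to $2$. The key move you are missing is to apply Lemma~\ref{bibd}(ii) and Theorem~\ref{pseudo}(ii) a \emph{second} time, now to the sub-block rather than to $N$, to see that the required parameters are unavailable. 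In short: recycle the $(k-\lambda)=2$ classification on the light block instead of inventing a fresh column-count for each sub-case.
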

\begin{proof}{From the spectrum of $G$ it is clear that $G$ is bipartite of order $n=2k+1$. Let $N$ and ${\bf x}=(x_1,\ldots,x_k)$ be the same as in the proof of Theorem~\ref{S_n}. Thus
$NN^\p =2I+{\bf x}^\p{\bf x}$ and so
\begin{align}
  d_i =&~ 2+x_i^2,\label{di2} \\
  d_{ij} = &~ x_ix_j.\label{dij2}
\end{align}
 Again we have ${\bf x}=\sqrt{\delta}{\bf w}$, where ${\bf w}=(w_1,\ldots,w_k)$ is a positive integer vector and $\delta$ is a square-free integer.

First assume that there exist some $i,j$ such that $d_i>d_j$. Then
 $w_i\ge w_j+1$, and
$$d_j\ge d_{ij}=\delta w_iw_j\ge\delta w_j^2+\delta w_j.$$
If $\delta w_j=1$, then $\delta=w_j=1$, and so $d_j=2$ which is impossible. So $\delta w_j=2$ and equalities must occur in all the above inequalities.
Hence two cases may occur: 1) $\delta=1$ and $w_j=2$ which implies $d_j=d_{ij}=6$ and $d_i=11$; or 2) $\delta=2$ and $w_j=1$ which implies $d_j=d_{ij}=4$ and $d_i=10$.
Again, like the proof of Theorem~\ref{S_n}, $N$ can be rearranged so that
$$N=\left(
    \begin{array}{cc}
      N_1 & J \\
      O & N_2
    \end{array}
  \right),$$
in which $N_2$, with $k_2$ rows, say, correspond to the vertices with smaller degrees.
 Then in the same manner as the proof of Theorem~\ref{S_n}, we see that $N_2N_2^\p=2I+\ell J$, where $\ell$ is either 16 or 36. As $N_2$ is either $k_2\times k_2$ or $k_2\times(k_2+1)$, it is the incidence matrix of either a symmetric $(k_2,\,\ell+2,\,\ell)$-design or a pseudo $(k_2,\,\ell+2,\,\ell)$-design. Such designs do not exist by Lemma~\ref{bibd} and Theorem~\ref{pseudo}.

 Therefore, $d_i=d$ for $i=1,\ldots,k$. By (\ref{di2}) and (\ref{dij2}), $d_{ij}=d-2$, for every $i,j$. So $N$ is the incidence matrix of a pseudo $(k,\,d,\,d-2)$-design. Thus the result follows from Theorem~\ref{pseudo}.}
\end{proof}

 \noindent{\bf Acknowledgements.} The research of the author was in part supported by a grant from IPM (No. 90050117). The author is grateful to Osvaldo Marrero for supplying the literature of pseudo designs, to Jack Koolen for carefully reading the manuscript, and to the referees whose helpful comments improved the presentation of the paper.

\end{document}